\def\Z{{\mathbb Z}}
\def\SL{{\rm SL}}
\def\GL{{\rm GL}}
\def\PGL{{\rm PGL}}
\def\P{{\mathbb P}}
\def\sol{{\rm sol}}
\def\locsol{{\rm loc\:sol}}
\def\fail{{\rm fail}}
\def\tot{{\rm tot}}
\def\Vol{{\rm Vol}}
\def\R{{\mathbb R}}
\def\Q{{\mathbb Q}}
\def\C{{\mathcal C}}
\def\Z{{\mathbb Z}}
\def\P{{\mathbb P}}
\def\Q{{\mathbb Q}}
\def\C{{\mathbb C}}
\newtheorem{theorem}{Theorem}
\newtheorem{corollary}[theorem]{Corollary}
\newtheorem{conjecture}[theorem]{Conjecture}
\newtheorem{lemma}[theorem]{Lemma}
\newtheorem{remark}[theorem]{Remark}
\newenvironment{proof}{\noindent {\bf Proof:}}{$\Box$ \vspace{2 ex}}
\title{A positive proportion of plane cubics fail the Hasse principle}
\author{Manjul Bhargava}
\begin{document}
\maketitle
\begin{abstract}
When all ternary cubic forms over $\Z$ are ordered by the heights of their coefficients, we show that a positive proportion of them fail the Hasse principle, i.e., they have a zero over every completion of $\Q$ but no zero over~$\Q$.  We also show that a positive proportion of all ternary cubic forms over~$\Z$ nontrivially satisfy the Hasse principle, i.e., they possess a zero over every completion of $\Q$ and also possess a zero over $\Q$.  Analogous results are proven for other genus~one models, namely, for equations of the form $z^2=f(x,y)$ where $f$ is a binary quartic form over~$\Z$, and for intersections of pairs of quadrics in~$\P^3$.
\end{abstract}

\section{Introduction}

The Hasse local-global principle for quadratic forms states that a
homogeneous degree two polynomial $f(x_0,\ldots,x_n)$ over~$\Q$ has a nontrivial rational zero if and only if it has a nontrivial zero over~$\Q_\nu$ for all places $\nu$ of~$\Q$.  In other words, the quadratic form
$f(x_0,\ldots,x_n)=0$ has a nontrivial solution globally if and only if it does
so locally at all places.  This principle is what lies behind much of the rich arithmetic theory of quadratic forms.

It is natural to ask whether, and to what extent, the Hasse
principle holds or fails for other classes of forms of higher degree.
It is easy to see that the principle continues to hold for (unary and) binary
cubic forms.  For suppose $f$ is a homogeneous degree three polynomial in two
variables. Then $f(x,y)=0$ has a solution in $\P^1$ over $\Q$ if and only if it
has a solution in $\P^1$ over $\Q_\nu$ for all places $\nu$.  Indeed, $f$ has no root in $\P^1$ over $\Q$ if and only if the splitting field of $f$ is 
a cubic field; by the Chebotarev density theorem, the latter occurs if and only if
$f$ is irreducible modulo $p$ for some $p$, implying that $f$ has no root in $\P^1$ over $\Q_p$. 


It is a classical question as to whether the Hasse principle continues
to hold for cubic forms in three (or more) variables.  The answer is no;
the first---now famous---counterexample was provided in 1957 by Selmer~\cite{Selmer},
who showed that the ternary cubic form $3x^3+4y^3+5z^3$ has no
rational zeros despite having a zero over $\Q_\nu$ for all
places $\nu$ of $\Q$.  That is, the plane cubic curve in $\P^2$ cut
out by 
\begin{equation}\label{selex}
3x^3+4y^3+5z^3=0
\end{equation}
has no rational point, but has a $\Q_\nu$-point for all $\nu$.

How frequent are such counterexamples to the Hasse principle among
plane cubic curves?  The purpose of this article is to prove that in fact a positive proportion of 
ternary cubic forms over $\Z$, when ordered by the sizes of their
coefficients, fail the Hasse principle.  Thus Selmer's counterexample
(\ref{selex}) is not so rare.  On the other hand, we also show that a positive proportion of ternary cubic forms over $\Z$ nontrivially satisfy the Hasse principle, i.e., they not only have a zero over every completion of $\Q$, but also possess a zero over $\Q$.

We also prove the analogous results for other 
genus one models, namely, for equations of the form $z^2=f(x,y)$ where $f$ is a binary quartic form over $\Z$, and for intersections of pairs of quadrics over $\Z$ in four variables; 
these correspond to genus one curves over $\Q$ equipped with maps to $\P^1$ and to $\P^3$, 
resepectively. 
In this article, we show for the first time that a positive proportion of genus one curves over $\Z$ of any of these three types fail the Hasse principle; and a positive proportion
nontrivially satisfy the Hasse principle. 

There has been much interest over the past century in the construction of genus one curves of the above three types that fail the Hasse principle. 
The first counterexample to the Hasse principle was given independently by Lind~\cite{Lind} and shortly later by Reichardt~\cite{Reichardt} in the 1940's.  Their counterexample was an equation of the form $z^2=f(x,y)$ where $f$ is a binary quartic form; namely,
they showed that 
\begin{equation}
z^2 = 2x^4 - 34y^4
\end{equation}
has a solution over $\Q_\nu$ for all $\nu$, but no rational solution.  In 1957, Selmer~\cite{Selmer} gave his counterexample (\ref{selex}) for plane cubics.  The methods of Lind, Reichardt, and Selmer can be used to construct infinite arithmetic families of counterexamples to the Hasse principle, as equations of the form $z^2=f(x,y)$ with $f$ a binary quartic, as plane cubics, or as intersections of quadrics in $\P^3$; see \cite{AL} for a nice exposition.  
A method to construct algebraic families of plane cubics that are counterexamples to the Hasse principle---i.e., plane cubics over $\Q(t)$ such that any specialization $t\in\Q$ yields a plane cubic over $\Q$ failing the Hasse principle---was given by Colliot-Th\'el\`ene and Poonen~\cite{CP} (see also Poonen~\cite{P} for an explicit such family).  
The question of whether a positive proportion of ternary cubic forms over $\Z$ might fail (or nontrivially satisfy) the Hasse principle was asked in a paper of Poonen and Voloch~\cite[immediately following Conjecture~2.2]{PV}; 
it was also subsequently discussed 
as a question 
on 
MathOverflow
\cite{mof}.

We now describe our results more precisely.


\subsection{Main results}

We identify the space $V(\R)$ of ternary cubic forms over $\R$ with $\R^{m}$ where $m=10$.  Let $B$ be any compact region in $V(\R)$ that is the closure of an open set containing the origin $0$. Then we may consider the integral ternary cubic forms in the homogeneously expanding region $tB$ ($t\in \R$) as~$t\to\infty$. 

We say that an integral ternary cubic form is {\it locally soluble} if it possesses a zero over every completion of $\Q$, and {\it soluble} if it possesses a zero over $\Q$.  An integral ternary cubic form {\it fails the Hasse principle} if it is locally soluble but not soluble. Let $V(\Z)\subset V(\R)$ denote the set of all integral ternary cubic forms, and let $V(\Z)_\locsol$, $V(\Z)_\sol$, and $V(\Z)_\fail$ denote the subsets of $V(\Z)$ of forms that are locally soluble, soluble, or fail the Hasse principle, respectively.

We set 

\begin{equation}
 \begin{array}{rcl}
 N_\locsol(V,t,B) &=& \#\{f\in V(\Z)_\locsol\cap tB\}\\[.02in]
 N_\sol(V,t,B) &=& \#\{f\in V(\Z)_\sol\cap tB\}\\[.02in]
 N_\fail(V,t,B) &=& \#\{f\in V(\Z)_{\fail}\cap tB\}\\[.02in]
 N_\tot(V,t,B) &=&  \#\{f\in V(\Z)\cap tB\}.
 \end{array}
\end{equation}
If $B=[-1,1]^{m}\subset V(\R)$, then we sometimes drop $B$ from the notation.  Thus $N_\tot(V,t)$ denotes the total number of integral ternary cubic forms whose coefficients are at most $t$ in absolute value, while 
$N_\locsol(V,t)$, $N_\sol(V,t)$, and $N_\fail(V,t)$ denote the number of integral ternary cubic forms in~$[-t,t]^{m}$ that are locally soluble, soluble, or fail the Hasse principle, respectively (cf.\ \cite[\S2]{PV}).

We are interested in the behavior of $N_\fail(V,t)/N_\tot(V,t)$ as $t\to\infty$. Our main theorem is the following:

\begin{theorem}\label{main}
For any $B$, we have
\begin{equation}
\liminf_{t\to\infty} \frac{N_\fail(V,t,B)}{N_\tot(V,t,B)} > 0.
\end{equation}\vspace{-.1in}
\end{theorem}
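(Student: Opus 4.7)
\medskip
\noindent\textbf{Proof plan.}
The plan is to parametrize locally soluble integral ternary cubic forms by $3$-Selmer elements of their Jacobian elliptic curves, to compute the asymptotic counts via the Bhargava--Shankar orbit-counting machinery, and then to compare the count of locally soluble forms with the count of soluble forms using known averages of Selmer groups over elliptic curves.

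The first step is to invoke the arithmetic parametrization (essentially due to Cremona--Fisher--Stoll, and made quantitative by Bhargava--Shankar): the $\SL_3(\Z)$-orbits on the set of integral ternary cubic forms with nondegenerate Jacobian $E/\Q$ are in (weighted) bijection with the elements of $\Sel_3(E)$. Under this bijection, the orbits containing a soluble form correspond exactly to the image of $E(\Q)/3E(\Q) \to \Sel_3(E)$, while the complementary orbits correspond precisely to forms that are locally soluble but fail to have a rational point---i.e., that fail the Hasse principle.

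The second step is to count integer points in the expanding region $tB$ by grouping them into $\SL_3(\Z)$-orbits. I would cut out a fundamental domain for $\SL_3(\Z)\backslash V(\R)$, truncate the cusp, apply geometry of numbers on the truncated part, and sieve by local solubility at all places. The outcome has the form
\[
N_\locsol(V,t,B) \sim c\cdot \operatorname{avg}_E |\Sel_3(E)|\cdot t^{10}, \qquad N_\sol(V,t,B) \sim c\cdot \operatorname{avg}_E |E(\Q)/3E(\Q)|\cdot t^{10},
\]
with the same constant $c>0$, since the per-orbit integer-point density depends only on the Jacobian $E$ and not on which Selmer class picks out the orbit. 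The Bhargava--Shankar theorem $\operatorname{avg}_E |\Sel_3(E)| = 4$ then fixes the leading coefficient of $N_\locsol$.

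The third step---and the main obstacle---is the strict inequality $\operatorname{avg}_E |E(\Q)/3E(\Q)| < 4$. The trivial pointwise bound $|E(\Q)/3E(\Q)| \leq |\Sel_3(E)|$ only yields $\leq 4$ on average. To get strictness I would invoke the companion theorem $\operatorname{avg}_E |\Sel_5(E)| = 6$ of Bhargava--Shankar: on the density-one subfamily with trivial $5$-torsion this forces $\operatorname{avg}_E 5^{\rk E} \leq 6$, and a short linear-programming argument (maximize $\sum_r 3^r p_r$ subject to $\sum_r 5^r p_r \leq 6$ and $\sum_r p_r = 1$) shows the optimum is attained at $p_1 = 19/20$, $p_2 = 1/20$, giving $\operatorname{avg}_E 3^{\rk E} \leq 33/10 < 4$. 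Combined with the generic triviality of $E(\Q)[3]$, one concludes $\operatorname{avg}_E |E(\Q)/3E(\Q)| \leq 33/10 < 4$. Subtracting, $N_\fail(V,t,B) \gtrsim c'\cdot t^{10}$ with $c' > 0$, and dividing by $N_\tot(V,t,B) \asymp t^{10}$ yields $\liminf_{t\to\infty} N_\fail(V,t,B)/N_\tot(V,t,B) > 0$. The secondary technical points---careful bookkeeping of stabilizers, cusp contributions, and the matching of integer models to Selmer classes in the orbit count---are handled uniformly by the Bhargava--Shankar framework.
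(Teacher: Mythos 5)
Your overall skeleton (Selmer parametrization of plane cubics, the $5$-Selmer average forcing $\operatorname{avg}_E 3^{\rk E}\leq 33/10<4$, and subtracting soluble from locally soluble) matches the paper's ingredients, and your linear-programming step reproduces exactly the paper's inequality $3^r\leq \tfrac{3(5^r-5)}{10}+3$. But there is a genuine gap at the step you treat as routine bookkeeping: the asymptotics
$N_\locsol(V,t,B)\sim c\cdot\operatorname{avg}_E|\Sel_3(E)|\cdot t^{10}$ and $N_\sol(V,t,B)\sim c\cdot\operatorname{avg}_E|E(\Q)/3E(\Q)|\cdot t^{10}$ with a common constant $c$. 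The Selmer averages are taken over elliptic curves ordered by \emph{height of the Jacobian}, whereas $tB$ orders forms by \emph{coefficient size}; these are incompatible orderings, and a single Selmer class is represented by many (indeed infinitely many) integral forms, and even many $\SL_3(\Z)$-classes, whose multiplicity of appearance in a coefficient box is not controlled by the Jacobian's height. There is no well-defined ``per-orbit integer-point density'' in $tB$ that depends only on $E$. In particular, your asserted asymptotic for $N_\sol$ is not provable by current methods — an exact asymptotic for the number of soluble forms in $tB$ is essentially the content of the paper's Conjectures 6--8, which remain open even granting the rank distribution conjecture. Since your argument needs an \emph{upper} bound on $N_\sol$ and a matching \emph{lower} bound on $N_\locsol$ with the same constant, the subtraction does not go through as written.

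The paper's resolution is different in exactly this respect. It first proves the failure statement inside the region $F_t$, the part of a fundamental domain for $G(\Z)$ on $V(\R)$ cut out by $|A|^3,|B|^2<t^d$: there each integral orbit appears once, the ordering \emph{is} by Jacobian height, and the count of generic integral points in $F_t$ is $c_1t^m+o(t^m)$ (the hard point, from \cite{TC}, being that the cusp contains almost only non-generic points). Combining with the Selmer bounds gives $\geq c_3t^m$ generic points in $F_t$ failing the Hasse principle. It then transfers to a compact box by choosing a fixed $r$ with $\Vol(rtB\cap F_t)/\Vol(F_t)=1-c_3/c_1+\delta$; since generic points in $F_t$ are counted by volume and non-generic points in the bounded set $rtB\cap F_t$ are negligible, at least $\delta c_1 t^m$ of the failing points must land in $rtB$. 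Note this yields only a $\liminf$ lower bound — which is all Theorem 1 claims — not the asymptotic equality your subtraction presupposes. To repair your argument you would need to replace your step two by this fundamental-domain-covering comparison (or something equivalent), rather than asserting asymptotics in $tB$ proportional to height-ordered Selmer averages.
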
 
In particular, $\liminf_{t\to\infty} N_\fail(V,t)/N_\tot(V,t)$ is strictly positive; i.e., {\it a positive proportion of plane cubic curves fail the Hasse principle}.  

In general, our proof gives a different lower bound in Theorem~\ref{main} for each choice of $B$.  Our method naturally allows us to construct regions $B$ for which we can prove that a proportion of greater than 28\% can be achieved in Theorem~\ref{main}
(see Remark~\ref{remimp}).  


We may also ask whether a positive proportion of integral ternary
cubic forms nontrivially {\it satisfy} the Hasse principle, i.e., are
not just locally soluble but also have a rational point.  In other words, we are interested in the behavior of $N_\sol(V,t)/N_\tot(V,t)$ as $t\to\infty$.  
We will also prove the following complementary theorem:
\begin{theorem}\label{main2}
For any $B$, we have
\begin{equation}\label{main2eq}
\liminf_{t\to\infty} \frac{N_\sol(V,t,B)}{N_\tot(V,t,B)} > 0.
\end{equation}\vspace{-.1in}
\end{theorem}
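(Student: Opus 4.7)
The plan is to mirror the proof of Theorem~\ref{main}, but instead of producing locally soluble integer ternary cubic forms whose associated 3-Selmer class is nontrivial, we produce forms whose associated Selmer class is \emph{trivial}---these arise from the identity point on the Jacobian and are therefore automatically soluble. The key observation is elementary: for any elliptic curve $E/\Q$, the identity element of $E(\Q)/3E(\Q)$ maps to the identity of $\Sel_3(E)$, and the corresponding $\SL_3(\Q)$-orbit of ternary cubic forms is represented by a Weierstrass model of $E$, which contains the rational flex at infinity. Hence every $E/\Q$ contributes at least one soluble $\SL_3(\Z)$-orbit of integer ternary cubic forms.

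The first step is to invoke the orbit-theoretic parametrization underlying the proof of Theorem~\ref{main}: modulo $\SL_3(\Z)$-equivalence, locally soluble integer ternary cubic forms with a given smooth elliptic Jacobian $E/\Q$ are in bijection with $\Sel_3(E)$, and the soluble forms correspond precisely to the image of $E(\Q)/3E(\Q)\hookrightarrow\Sel_3(E)$. Since the identity class in $E(\Q)/3E(\Q)$ always lies in this image, it yields a distinguished soluble $\SL_3(\Z)$-orbit over each smooth Jacobian.

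Next, I would count, using the same fundamental-domain integration as in the proof of Theorem~\ref{main}, the integer ternary cubic forms in $tB$ lying in these distinguished identity-Selmer orbits. Writing $\FF\subset V(\R)$ for a fundamental domain for $\SL_3(\Z)$ on orbits with smooth Jacobian, and $\FF_0\subset\FF$ for the subset whose fiberwise representatives correspond to the identity Selmer class, the integer-point count of $\FF_0\cap tB$ provides a lower bound for $N_\sol(V,t,B)$.

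The main obstacle is to show $\Vol(\FF_0\cap tB)\gg t^{10}$. Because the identity class exists over every smooth pair of invariants $(S,T)$, the subset $\FF_0$ projects onto the entire smooth invariant locus and contributes a full $\SL_3(\R)$-orbit of positive measure in each fiber. Integrating against the natural measure on the $2$-dimensional invariant space (with its weights $4$ and $6$) yields the desired $\Vol(\FF_0\cap tB)\gg t^{10}$, up to the same boundary and cusp contributions handled in the proof of Theorem~\ref{main}. The resulting bound $N_\sol(V,t,B)\gg t^{10}\gg N_\tot(V,t,B)$ then yields \eqref{main2eq}, and the analogous arguments go through verbatim for the binary quartic and pair-of-quadrics models.
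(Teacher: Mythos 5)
Your strategy breaks down at precisely the step you flag as the ``main obstacle,'' and it cannot be repaired along these lines. The integral forms corresponding to the identity Selmer class are exactly the \emph{non-generic} ones---plane cubics with a $\Q$-rational flex---and ``corresponding to the identity class'' is an arithmetic condition on integral points, not an archimedean condition on $V(\R)$. There is no positive-measure region $\FF_0\subset V(\R)$ cut out by it, so the assertion $\Vol(\FF_0\cap tB)\gg t^{10}$ is not meaningful; what you actually need is an integral-point count, and the number of integral ternary cubics in $[-t,t]^{10}$ possessing a rational flex is $o(t^{10})$ (by Hilbert irreducibility, the Galois action on the nine flexes is generically the full Hessian group of order $216$, which has no fixed flex). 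This is exactly the dichotomy the paper's machinery is built around: in the fundamental domain $F_t$ the identity-class points do number on the order of $t^{10}$ (roughly one $\SL_3(\Z)$-class per elliptic curve of height less than $t^{12}$), but they all escape into the cusp, and any bounded region $rtB\cap F_t$ contains a negligible number of non-generic integral points. Hence your distinguished soluble orbits contribute a density-zero subset of $tB$, and no positive proportion follows.

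The paper's proof must therefore produce soluble classes that are \emph{generic} (of exact order $3$ in the Selmer group), since only generic integral forms are counted by volume in bounded regions. That requires nontrivial rational points on the Jacobians: the essential input is the Bhargava--Skinner theorem that a positive proportion of elliptic curves have rank $1$, whence $E(\Q)/3E(\Q)$ contains at least two elements of exact order $3$, giving a positive proportion of generic $3$-Selmer elements that are soluble (Theorem~\ref{nselsat2}); the fundamental-domain and covering argument of Theorem~\ref{main} is then run verbatim with ``fails the Hasse principle'' replaced by ``is soluble.'' Your proposal substitutes the trivial solubility of the identity class for this deep input, and that substitution is exactly what fails.
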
 
In particular, $\liminf_{t\to\infty} N_\sol(V,t)/N_\tot(V,t)$ is strictly positive; i.e., {\it a positive proportion of plane cubic curves also nontrivially satisfy the Hasse principle}.  

\vspace{.05in}
Finally, if we replace $V$ by (a) the 5-dimensional space of binary quartic forms, or (b) the 20-dimensional space of pairs of quaternary quadratic forms,
 then elements of $V$ again naturally correspond to genus one curves, with maps to $\P^1$ or $\P^3$, 
 respectively.  

If we then define $V(\R)$, $V(\Z)$, $V(\Z)_\locsol$, $V(\Z)_\sol$, $V(\Z)_\fail$
and $B$, $N_\locsol(V,t,B)$, ${N_\sol(V,t,B)}$, $N_\fail(V,t,B)$, and ${N_\tot(V,t,B)}$ in the identical manner for these other spaces of forms, then we will prove that Theorems 1 and 2 continue to hold as stated for these models of genus one curves as well.




%
\subsection{Method of proof}

The main ingredients in proving Theorems 1  and 2 for these various spaces of genus one models are the recent results, joint with Shankar~\cite{BS,TC,foursel,fivesel},
on the average size of the $n$-Selmer group of
elliptic curves over $\Q$ when ordered by height (for $n\leq 5$), together with a fundamental
domain covering method used in \cite{squarefree}.  In the case of Theorem 2, another crucial ingredient is the 
recent joint work with Skinner~\cite{rankone}, which enables us to construct rational points on a positive proportion of these genus one curves.

Recall that any elliptic curve over $\Q$ can be expressed in the form
\begin{equation}\label{Eeq}
E_{A,B}:y^2=x^3+Ax+B,
\end{equation}
where $A,B \in \Z$.
The {\it $($naive$)$ height} $H(E_{A,B})$ of the elliptic curve $E=E_{A,B}$ is then defined by
$$H(E_{A,B}):= \max\{|A^3|,B^2\}.$$

\pagebreak
In joint work with Shankar~\cite{BS,TC,foursel,fivesel}, we proved:

\begin{theorem}\label{selresults}
When elliptic curves over $\Q$ are ordered by height, then:
\begin{itemize}
\item[{\rm (a)}]
The average size of the $2$-Selmer group is $3$.
\item[{\rm (b)}]
The average size of the $3$-Selmer group is $4$.
\item[{\rm (c)}]
The average size of the $4$-Selmer group is $7$.
\item[{\rm (d)}]
The average size of the $5$-Selmer group is $6$.
\end{itemize}
\end{theorem}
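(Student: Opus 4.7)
The plan is to establish, for each $n\in\{2,3,4,5\}$, a parametrization of $\Sel_n(E)$ by integral orbits of a carefully chosen representation $(G_n,V_n)$, and then to count those orbits with bounded invariants via geometry of numbers. Specifically, we take $V_2$ = binary quartic forms under $G_2=\PGL_2$; $V_3$ = ternary cubic forms under $G_3=\PGL_3$; $V_4$ = pairs of quaternary quadratic forms under $G_4=\PGL_2\times\PGL_4$; and $V_5$ = quintuples of $5\times 5$ skew-symmetric matrices under $G_5=\PGL_5\times\PGL_5$. In each case there exist polynomial invariants $A,B$ on $V_n$ whose values cut out the elliptic curve $E_{A,B}$, and the everywhere-locally-soluble $G_n(\Q)$-orbits on $V_n(\Q)$ with invariants $(A,B)$ are in bijection with the nontrivial elements of $\Sel_n(E_{A,B})$; this is the algebraic input that reduces each item of the theorem to a counting problem on integral orbits.

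The analytic step is to count integral $G_n(\Z)$-orbits on $V_n(\Z)$ with invariants of height at most $X$. This proceeds via the averaging method: fix a fundamental domain $\mathcal F$ for $G_n(\Z)\backslash G_n(\R)$, express the orbit count as a lattice-point count of $V_n(\Z)$ in $\mathcal F\cdot R_X$, where $R_X$ is a fundamental set of $G_n(\R)$-orbits on $V_n(\R)$ with invariants of height $\leq X$, and then average the counting functional over a bounded region in $G_n(\R)$ to replace the point count by a volume (up to a negligible error). One separates the contribution of the \emph{cusp} of $\mathcal F$, showing that integer points lying deep in the cusp are either reducible — in which case they map to the identity class of $\Sel_n$ — or of strictly lower order of magnitude than the main term. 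A sieve is then applied to restrict to everywhere-locally-soluble orbits, and the product of the resulting local densities at each place of $\Q$ gives the claimed averages $3,4,7,6 = \sigma(n)$.

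The main obstacle is the simultaneous control of the cusp and of the sieve. The cusp analysis demands a careful stratification of $\mathcal F$ by the sizes of distinguished coordinate entries, and a proof that the nonidentity part of the cuspidal contribution is of lower order; this is delicate because $\mathcal F$ is noncompact and the $G_n(\Z)$-stabilizer structure changes across strata, and for $n=4,5$ one is working with reductive groups of higher rank whose cuspidal geometry is considerably more involved than for $n=2,3$. The sieve, in turn, requires a uniformity estimate bounding the number of $G_n(\Z)$-orbits on $V_n(\Z)$ whose invariants are divisible by $p^2$ uniformly in $p$ — a quantitative geometry-of-numbers input that must be obtained by controlling singular strata of $V_n$ mod $p$. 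Granting these inputs, the local density computations at each finite prime reduce to orbit counts on $V_n(\Z_p)$, the archimedean density reduces to a volume computation on the real orbit set, and a Tamagawa-style product formula assembles these into the stated averages.
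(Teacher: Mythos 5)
The paper does not prove Theorem~\ref{selresults} at all: it is quoted as established in the joint works with Shankar \cite{BS,TC,foursel,fivesel}, one paper per part, and the present article only uses the statement as input. Your proposal is therefore not being measured against an in-paper argument, but it does accurately reproduce the strategy of those four papers: coregular representations $(G_n,V_n)$ whose locally soluble rational orbits with fixed invariants $(A,B)$ parametrize Selmer elements of $E_{A,B}$, reduction theory and the averaging trick over a bounded piece of $G_n(\R)$ to convert the integral orbit count into a volume, separate treatment of the cusp of the fundamental domain (where the reducible orbits, corresponding to identity Selmer classes, concentrate), a uniformity estimate to run the local-solubility sieve, and a product of local densities assembling to $\sigma(n)=3,4,7,6$.

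Two caveats. First, what you have written is a roadmap rather than a proof: the cusp stratification, the uniformity estimates needed for the sieve (especially for $n=4$ and $n=5$, where the groups have higher rank and the singular strata are harder to control), and the local and archimedean density computations are each the main content of one of the cited papers, and your ``granting these inputs'' leaves every one of them open. Second, some details need correcting: for $n=4$ the acting group is not $\PGL_2\times\PGL_4$ but the quotient $\{(\gamma_2,\gamma_4)\in\GL_2\times\GL_4:\det(\gamma_2)\det(\gamma_4)=1\}/\{(\lambda^{-2}I_2,\lambda I_4)\}$ recorded in Section~4 of this paper (the precise central quotient matters for matching stabilizers with $E[n]$ and hence for not over- or under-counting orbits); and the locally soluble $G_n(\Q)$-orbits with invariants $(A,B)$ are in bijection with \emph{all} of the $n$-Selmer group, not only its nontrivial elements --- the identity corresponds to the reducible (non-generic) orbits, which is precisely why the cuspidal contribution must be isolated and matched against identity classes rather than discarded, as your own later sentence about the cusp implicitly acknowledges.
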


These results immediately give upper bounds on the average rank of all
elliptic curves, when ordered by height.  For example,
Theorem~\ref{selresults}(a) implies that the average rank of all
elliptic curves is less than 1.5.  This is because the rank $r$ of an elliptic curve satisfies $2^r\leq 2r$, 
and since the (limsup of the) average size of $2^r$ is at most 3, we conclude that the average of $2r$ is at most 3, i.e., the limsup of the average $\bar r$ of $r$ is at most 1.5.  This could potentially be achieved if $r=0$ and $r=1$ each occur for a density of 1/2 of elliptic curves. 


However, if one uses the $3$-Selmer average in Theorem~\ref{selresults}(b) rather than the 2-Selmer average in Theorem~\ref{selresults}(a), one
obtains better bounds.  Indeed, we have the inequality $3^r\leq 6r - 3$; since the average size of $3^r$ is at most 4, we conclude that the limsup $\bar r$ of the average rank $r$ satisfies $6\bar r-3\leq 4$, whence $\bar r\leq 7/6$.  This could potentially be achieved if a density of $5/6$ of elliptic curves had rank 1 and 1/6 had rank 2.
By an analogous argument using the inequality $5^r\leq 20r-15$, we obtain using part (d) of Theorem~\ref{selresults} that $\bar r\leq 21/20$, which is potentially achieved if a density of 19/20 of elliptic curves had rank 1 and 1/20 had rank 2.


{}From the point of view of Theorem~\ref{selresults}, then, perhaps
Theorem 1 is not so surprising.  Indeed,
Theorem~\ref{selresults}(d) gives a smaller upper
bound on the average rank of elliptic curves than Theorems~\ref{selresults}(a)--(c), implying that many (in
fact, a positive proportion of) $2$-, $3$-, or $4$-Selmer elements of elliptic
curves, when ordered by height, correspond to nontrivial elements of the
Tate-Shafarevich group, and thus fail the Hasse principle.  Now
2-, 3-, and 4-Selmer elements of elliptic curves may be viewed as binary quartic forms, ternary cubic
forms, or pairs of quaternary quadratic forms over~$\Q$, respectively (see~\cite{Fisher1}, \cite{CFS}, \cite{BS,TC,foursel}), and so this is nearly Theorem 1.

However, this does not quite yield Theorem 1, because in the above
``argument'', there are two major issues: i) we are ordering all
$3$-Selmer elements by the heights of the Jacobian elliptic curve, and
not by coefficient size; and, ii) we are not taking into account the
fact that many (indeed, infinitely many) ternary cubic forms can
correspond to the same 3-Selmer element of their common Jacobian; in
fact, many different $\SL_3(\Z)$-equivalence classes of ternary cubic
forms can correspond to the same 3-Selmer element.  The same problems
occur for binary quartic forms and pairs of quaternary quadratic
forms.

To overcome these issues, we first work in a fundamental domain
$F_1$ for the action of $\SL_3(\Z)$ on the elements of $V(\R)$ where 
the $\SL_3$-invariants on $V$ are bounded by 1.  The region $F_1$ has finite
volume, but it is {\it not} a bounded region.  However, unlike the compact
regions $B$ of \S1.1, the region $F_1$ has the advantage that every
$\SL_3(\Z)$-equivalence class of integral ternary cubic forms is
represented only once in $tF_1$ as $t\to\infty$.  We then use the fact, proven in
\cite{TC}, that over all elliptic curves of bounded height, each
non-identity $3$-Selmer element is represented as an
$\SL_3(\Z)$-equivalence class of ternary cubic forms at most a bounded
number of times {\it on average}.  Away from the loci of {\it
  non-generic} ternary cubic forms (i.e., ternary cubic forms that are
degenerate or correspond to identity 3-Selmer elements), this then
yields the result of Theorem~\ref{main} for the unbounded region~$F_1$ in place of $B$.

To transfer the result from $F_1$ to compact regions~$B$ as 
in \S1.1, we homogeneously expand~$B$ to~$rB$ so that it covers as
large a proportion of $F_1$ as desired.  Since $B$ is compact, such an
expansion~$rB$ of~$B$ will never cover all of $F_1$; in particular, it
will always miss the infinite cuspidal subregion of $F_1$.  The
geometry of $F_1$ is such that the cusp of $F_t=tF_1$ as $t\to \infty$
contains most of the integral ternary cubic forms corresponding to
identity 3-Selmer elements, and a negligible proportion of integral
ternary cubic forms corresponding to non-identity 3-Selmer elements.
This allows one to show that the number of generic integral
ternary cubic forms in~$F_t$ grows like the volume of $F_t$ 
(cf.\ \cite[Equation~(11)]{TC}).  

Therefore, once $r$ is chosen
large enough, then: $rtB$ and $F_t$ both have volumes on the order 
of~$t^{10}$; asymptotically, the number of generic points in each is
given by their respective volumes; and $rtB$ covers most of $F_t$ in
terms of volume.  Since a positive proportion of integral ternary
cubic forms in $tF_1$, that do not correspond to identity 3-Selmer
elements, fail the Hasse principle, the same must then be true of the
forms in~$rtB$!  

Analogous arguments work also for the space of binary quartic forms
and the space of pairs of quaternary quadratic forms (except that, in the case of
the latter, instead of ``non-identity 4-Selmer elements'', one must use
instead ``$4$-Selmer elements of exact order 4'').  


To prove Theorem~\ref{main2}, we must produce many integral ternary
cubic forms $f$ for which the cubic curve $f=0$ in $\P^2$ has rational
points. In joint work with Skinner~\cite{rankone}, we recently proved:
\begin{theorem}\label{cs}
  When ordered by height, a positive proportion of elliptic curves $E$
  over $\Q$ have rank~$1$.
\end{theorem}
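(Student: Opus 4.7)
The plan is to combine the Selmer-rank averages of Theorem~\ref{selresults} with a $p$-converse to the theorem of Gross--Zagier--Kolyvagin. Since the 5-Selmer average of~$6$ already forces the average algebraic rank to satisfy $\bar r \le 21/20$, it is immediate that a positive proportion of elliptic curves have rank~$\le 1$. The real task is to show that a positive proportion of elliptic curves have rank \emph{at least}~$1$---that is, to construct a rational point of infinite order on a positive proportion of them.

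I would work inside a subfamily $\mathcal{F}$ of elliptic curves $E_{A,B}$, cut out by finitely many congruence conditions on $(A,B)$, engineered so that: (i)~the global root number $w(E)$ equals $-1$ for a positive proportion of $E\in\mathcal{F}$; (ii)~a positive proportion of $E\in\mathcal{F}$ have good ordinary reduction at a fixed small prime $p$ and have irreducible residual mod-$p$ Galois representation; and (iii)~the averages of Theorem~\ref{selresults} remain valid (or at least remain bounded) when restricted to~$\mathcal{F}$. Conditions (ii) and (iii) can be established by standard sieving arguments and by the congruence-stable nature of the Bhargava--Shankar orbit-counting method.

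Within $\mathcal{F}$, the parity of the $p$-Selmer rank $\rk_p\Sel(E)$ agrees with the parity of the analytic rank, which equals the parity of $w(E)$. Combined with the bounded $p$-Selmer average on~$\mathcal{F}$, this forces $\rk_p\Sel(E) = 1$ on a positive-proportion sub-subfamily of $\mathcal{F}$: the root-number condition rules out rank~$0$, and the Selmer average rules out rank~$\ge 3$. One then invokes a $p$-converse theorem of Skinner and of W.~Zhang: under the local hypotheses built into~$\mathcal{F}$, the equality $\rk_p\Sel(E)=1$ implies $\mathrm{ord}_{s=1}L(E,s)=1$, whereupon Gross--Zagier--Kolyvagin yields $\rk_\Z E(\Q)=1$ together with an explicit (Heegner) point of infinite order.

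The principal obstacle is step~(i), controlling the distribution of root numbers on $\mathcal{F}$. The root number is a product of local signs, and while the local signs at any fixed set of primes can be prescribed by congruence conditions on $(A,B)$, the contribution from primes of multiplicative or additive reduction varies with the discriminant and is not governed by finitely many congruences alone. Establishing that $w(E)=-1$ on a positive proportion of $E\in\mathcal{F}$ therefore requires a nontrivial sieve---in the spirit of Helfgott's equidistribution results for root numbers---compatible with the orbit-counting in the Selmer average. Once this distributional input is secured, the remaining ingredients (the Selmer averages of Theorem~\ref{selresults} and the $p$-converse machinery) assemble the theorem in a routine fashion.
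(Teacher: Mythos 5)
First, a point of logistics: the paper does not actually prove Theorem~\ref{cs} --- it is quoted as the main result of the joint work with Skinner \cite{rankone} and used here as a black box. So the comparison is really between your sketch and the argument of \cite{rankone}. That said, your sketch is essentially the strategy of that paper: restrict to a family cut out by congruence conditions on $(A,B)$ with controlled root numbers, invoke the $p$-parity theorem (Dokchitser--Dokchitser \cite{DD}) to force odd $p^\infty$-Selmer corank, use the $5$-Selmer average of Theorem~\ref{selresults}(d) --- which does persist on congruence families, as already exploited in Remark~\ref{remimp} --- to force that corank to equal exactly $1$ for a positive proportion of curves, and then apply the $p$-converse theorems of Skinner and of W.~Zhang together with Gross--Zagier--Kolyvagin to produce a point of infinite order. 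Your conditions (ii) and (iii) are handled exactly as you suggest.

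The step you leave open --- (i), the distribution of root numbers on a congruence family --- is indeed the delicate point, and as written your argument is incomplete there; but it does not require a Helfgott-type analytic sieve. In \cite[\S5]{fivesel} (and in \cite{rankone}) this is handled by an elementary involution: $E_{A,-B}$ is the quadratic twist of $E_{A,B}$ by $-1$, and on a suitable positive-density family defined by congruence conditions on $(A,B)$ one can arrange that the two members of each pair $\{E_{A,B},E_{A,-B}\}$ have opposite root numbers, so that the root number is exactly equidistributed on that family; this is precisely the equidistribution statement recorded in Remark~\ref{remimp}. With that input, your outline assembles into the proof given in \cite{rankone}; without it, the assertion that $w(E)=-1$ for a positive proportion of curves in a family on which the Selmer average and the local hypotheses of the $p$-converse theorem are simultaneously enforced is exactly the content that still has to be supplied.
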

If the rank of an elliptic curve $E$ over $\Q$ is~1, then the
$3$-Selmer group of $E$ must contain at least~$2$ non-identity
elements having a rational point, i.e., nontrivially satisfying the
Hasse principle (since $E(\Q)/3E(\Q)$ has then at least two non-identity elements).  This then shows, by arguments analogous to those
outlined above for Theorem~\ref{main}, that: a) a positive proportion
of the generic integral ternary cubic forms in $F_t$, as $t\to\infty$, 
nontrivially satisfy the Hasse principle; and, consequently, b)~a
positive proportion of the (generic) integral ternary cubic forms in
$rtB$ (for a fixed sufficiently large constant $r$) nontrivially satisfy the Hasse principle.

We carry out these proofs of Theorems~\ref{main} and \ref{main2} in detail in Sections~2, 3, and~4.

\subsection{Conjectures}

The rank distribution conjecture of
Goldfeld--Katz--Sarnak, together with the methods behind the proofs of
Theorems~\ref{main} and \ref{main2}, allow us to
formulate a very precise conjecture on the exact proportion of
plane cubics that fail the Hasse principle. 

\begin{conjecture}\label{conjpc}
 Let $V$ be the space of ternary cubic forms.
Then for any $B$, we have
\begin{equation}
\lim_{t\to\infty} \frac{N_\fail(V,t,B)}{N_\tot(V,t,B)} = {\frac23} \prod_p \left(1-\frac{p^9 - p^8 + p^6 - p^4 + p^3 + p^2 - 2 p + 1}{3 (p^2+1) (p^4+1) (p^6+p^3+1)}\right) \approx 64.837\%.
\end{equation}
\end{conjecture}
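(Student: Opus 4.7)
The strategy is to upgrade the proofs of Theorems~\ref{main} and~\ref{main2} from lower bounds to exact asymptotics. Two ingredients are needed beyond those already in the proof of Theorem~\ref{main}: (i) the rank-distribution conjecture of Goldfeld--Katz--Sarnak, that $100\%$ of elliptic curves over $\Q$ have rank in $\{0,1\}$ with the two ranks occurring in proportion $1/2$ each; and (ii) an exact (rather than just approximate) evaluation of the $p$-adic density of locally soluble integral ternary cubic forms at every prime $p$.

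Granted~(i) and Theorem~\ref{selresults}(b), the average number of elements of $\Sel_3(E)$ coming from $E(\Q)/3E(\Q)$ is exactly $\tfrac12\cdot 1+\tfrac12\cdot 3=2$: only the identity on a rank-$0$ curve, and all three elements of $E(\Q)/3E(\Q)$ on a rank-$1$ curve. Since the average size of $\Sel_3(E)$ is $4$, the average number of $3$-Selmer elements failing the Hasse principle is $4-2=2$, while the average number of non-identity $3$-Selmer elements is $4-1=3$. Thus, on average, $2$ out of every $3$ non-identity $3$-Selmer elements of an elliptic curve fail the Hasse principle, which is the origin of the global factor $2/3$ in the conjecture.

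To translate this into a statement about $\SL_3(\Z)$-orbits of integral ternary cubic forms, I would refine the orbit-counting step of \cite{TC} from an inequality into an exact asymptotic for the number of generic locally soluble orbits in the fundamental domain $tF_1$. A $p$-adic mass computation at each prime, based on the parametrization of $3$-Selmer elements by locally soluble ternary cubic forms together with the local orbit structure of $V(\Z_p)$ under $\SL_3(\Z_p)$, should identify the local factor at $p$ as $1-c_p$, where $c_p$ is the rational function in $p$ appearing in the conjectured formula; this yields $N_\locsol(V,t,F_1)/N_\tot(V,t,F_1)\to\prod_p(1-c_p)$. Combining with the global factor $2/3$ and transferring from $F_1$ to an arbitrary compact $B$ via the fundamental-domain covering method of \cite{squarefree}, using that generic forms in the cusp of $F_t$ contribute negligibly, then yields the conjectured asymptotic.

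The principal obstacle is input~(i): the Goldfeld--Katz--Sarnak conjecture is currently well out of reach, since we do not even know unconditionally that $100\%$ of elliptic curves have bounded rank. A secondary issue is that the average size of the $3$-part of the Tate--Shafarevich group, which enters implicitly through the compatibility of the identity $\mathrm{avg}\,|\Sel_3(E)|=4$ with the rank decomposition above, must match the prediction of the Bhargava--Kane--Lenstra--Poonen--Rains heuristics in both ranks $0$ and $1$. The local density computation in step~(ii), while technically intricate, should be essentially a formal consequence of the orbit theory developed in \cite{TC}.
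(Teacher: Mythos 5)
First, note that this statement is a \emph{conjecture}; the paper does not prove it, but only gives a heuristic justification in Section~5, and your proposal is essentially that same heuristic. Your global factor of $2/3$ is derived exactly as in the paper: under Goldfeld--Katz--Sarnak the average number of elements of $E(\Q)/3E(\Q)$ is $\frac12(1+3)=2$, of which on average $\frac12\phi(3)=1$ are of exact order $3$, so of the average of $3$ non-identity (equivalently, generic) $3$-Selmer elements, $1$ has a rational point and $2$ fail the Hasse principle. Two remarks on where your write-up diverges from the paper. First, your ingredient~(ii) is not needed as new input: the existence of $\lim N_\locsol/N_\tot$ is a theorem of Poonen--Voloch \cite{PV}, and its exact value $\prod_p(1-c_p)\approx 97.256\%$ is computed unconditionally in \cite{BCF}; the paper simply cites these, so no fresh $p$-adic mass computation is required to state the conjecture. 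Second, and more substantively, you omit the two independence assumptions that the paper makes explicit and that are genuinely needed: the ratio $2/3$ is a statement about $G(\Q)$-orbits (Selmer elements), whereas $N_\fail$ and $N_\locsol$ count integral forms, and a single locally soluble $G(\Q)$-class breaks into a number of $G(\Z)$-classes (each contributing on the order of $\Vol(F_t)$ integral points in $tB$) that varies with the class. One must assume (a) that this multiplicity, which is governed by local conditions, is independent of whether the curve has a global rational point, and (b) that the integral forms corresponding to globally soluble curves are equidistributed in $F_1$ under scaling, so that the proportion transfers from $F_t$ to translates covering $tB$. Without (a) in particular, the $2/3$ proportion of failing Selmer classes need not equal the $2/3$ proportion of failing integral forms, so you should state these assumptions rather than fold them silently into the ``transfer'' step.
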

That is, we conjecture that a proportion of about 65\% of plane cubic curves defined over $\Z$ fail the Hasse principle. 

\vspace{.025in}
Conjecture~\ref{conjpc} can be phrased in another way that is perhaps more
enlightening.  
In \cite[Theorem~3.6]{PV}, Poonen and 
Voloch proved that 
$\lim_{t\to\infty} {N_\locsol(V,t,B)}/{N_\tot(V,t,B)}$
exists and is positive.  In \cite{BCF}, in joint work with Cremona and Fisher, we computed this limit precisely; specifically, we showed that
\begin{equation}\label{locsolprob}
\lim_{t\to\infty} \frac{N_\locsol(V,t,B)}{N_\tot(V,t,B)}=\prod_p \left(1-\frac{p^9 - p^8 + p^6 - p^4 + p^3 + p^2 - 2 p + 1}{3 (p^2+1) (p^4+1) (p^6+p^3+1)}\right) \approx 97.256\%.
\end{equation}
Thus Conjecture~\ref{conjpc} is equivalent to 

\begin{conjecture}\label{conjpc2}
 Let $V$ be the space of ternary cubic forms.
Then for any $B$, we have
\begin{equation}
\liminf_{t\to\infty} \frac{N_\fail(V,t,B)}{N_\locsol(V,t,B)} = \frac23.
\end{equation}
\end{conjecture}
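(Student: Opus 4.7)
The plan is to deduce Conjecture~\ref{conjpc2} from the Goldfeld--Katz--Sarnak conjecture on ranks of elliptic curves, by sharpening the counting arguments used in the proofs of Theorems~\ref{main} and~\ref{main2}. The starting point is the correspondence, already central to those proofs, between $\SL_3(\Z)$-equivalence classes of non-degenerate integral ternary cubic forms and pairs $(E,\xi)$, where $E/\Q$ is an elliptic curve and $\xi$ is a \emph{non-identity} element of $\Sel_3(E)$. Under this correspondence, $\xi$ lies in the image of $E(\Q)/3E(\Q) \hookrightarrow \Sel_3(E)$ if and only if the associated cubic form is soluble, and $\xi \notin \mathrm{im}(E(\Q)/3E(\Q))$ if and only if the form fails the Hasse principle. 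The fundamental-domain analysis used in the proofs of Theorems~\ref{main} and~\ref{main2} shows that, as $t\to\infty$, both $N_{\locsol}(V,t,B)$ and $N_{\fail}(V,t,B)$ are given, up to negligible error, by the same constant multiple of a weighted sum $\sum_E w(E,t,B)\cdot n(E)$, where the weight $w(E,t,B)$ depends only on $E$, and the count $n(E)$ equals $|\Sel_3(E)|-1$ in the former case and $|\Sel_3(E)|-|\mathrm{im}(E(\Q)/3E(\Q))|$ in the latter. It therefore suffices to establish that
\[
\lim_{X\to\infty}\frac{\sum_{H(E)\le X}\bigl(|\Sel_3(E)|-|\mathrm{im}(E(\Q)/3E(\Q))|\bigr)}{\sum_{H(E)\le X}\bigl(|\Sel_3(E)|-1\bigr)} \;=\; \frac23.
\]

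For the denominator, Theorem~\ref{selresults}(b) gives that the average of $|\Sel_3(E)|-1$ over elliptic curves ordered by height equals $3$. For the numerator, observe that a density-$1$ subset of elliptic curves have $E(\Q)[3]=0$ (the $3$-torsion condition cuts out a thin locus in the $(A,B)$-plane), so $|\mathrm{im}(E(\Q)/3E(\Q))|=3^{\rk E}$ for $100\%$ of $E$. Assuming the Goldfeld--Katz--Sarnak conjecture, the density of rank-$0$ curves is $1/2$ and the density of rank-$1$ curves is $1/2$, with rank~$\ge 2$ contributing density $0$; hence the average of $3^{\rk E}$ equals $\tfrac12\cdot 1 + \tfrac12\cdot 3 = 2$, and the average of $|\Sel_3(E)|-|\mathrm{im}(E(\Q)/3E(\Q))|$ equals $4-2=2$. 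The ratio of averages is therefore $2/3$, as required.

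The transition from an equality of averages over elliptic curves to an equality of densities inside $V(\Z)\cap tB$ is carried out by the same fundamental-domain machinery of~\cite{TC,squarefree} used to prove Theorems~\ref{main} and~\ref{main2}: one passes from $tB$ to a large dilate of the fundamental domain $F_1$, verifies that the number of generic integral forms in each region is asymptotic to its volume, and controls the cusp contribution on both sides. The only new ingredient beyond what was used to prove Theorem~\ref{main} is a matching upper bound (rather than only a lower bound) on the number of orbits lost at the cusp, which is routine given the cusp geometry.

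The principal obstacle is, of course, the Goldfeld--Katz--Sarnak conjecture itself, which is deeply open: the methods of~\cite{rankone} and Theorem~\ref{cs} produce only \emph{some} positive proportion of rank-$1$ curves, far from the conjectural $1/2$. Conditional on Goldfeld--Katz--Sarnak, however, no further arithmetic input is required; the key averaging ingredient, Theorem~\ref{selresults}(b), is already in hand. Analogous arguments conditional on the same conjecture would yield the corresponding predictions for binary quartics and for intersections of pairs of quadrics in $\P^3$.
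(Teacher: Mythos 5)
Your arithmetic heuristic matches the paper's reasoning in Section~5 exactly: under Goldfeld--Katz--Sarnak, a density-$1$ set of curves has $E(\Q)[3]=0$, so $|\mathrm{im}(E(\Q)/3E(\Q))|=3^{\rk E}$ averages to $\tfrac12\cdot 1 + \tfrac12\cdot 3=2$; combined with the average $|\Sel_3(E)|=4$ from Theorem~\ref{selresults}(b), the average number of failing non-identity Selmer classes is $4-2=2$ out of an average of $4-1=3$ non-identity classes, giving $2/3$. This is precisely the computation behind Theorem~\ref{cls} and Corollary~\ref{cls2} (with $\phi(3)/(2\cdot 3)=1/3$ of generic elements soluble).

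Where your write-up overreaches is in the transfer step. You assert that "both $N_{\locsol}$ and $N_{\fail}$ are given \ldots\ by the same constant multiple of a weighted sum $\sum_E w(E,t,B)\, n(E)$ where the weight $w(E,t,B)$ depends only on $E$," and conclude that "conditional on Goldfeld--Katz--Sarnak, no further arithmetic input is required." This is not what the paper's own derivation claims, and it is not correct as stated. The weight attached to a given $G(\Q)$-orbit is not a function of $E$ alone: the number of $G(\Z)$-equivalence classes into which a $G(\Q)$-class of integral ternary cubics breaks is governed by local data (cf.\ Sadek's work), and a priori this could correlate with whether the genus one curve has a global point. The paper therefore explicitly flags \emph{two additional independence assumptions} beyond Goldfeld--Katz--Sarnak: (i) that the splitting of $G(\Q)$-classes into $G(\Z)$-classes is independent of global solubility, and (ii) that the integral forms $v\in F_t$ whose curve $C(v)$ has a rational point are equidistributed in $F_1$ under rescaling (so that the statement transfers from $F_t$ to arbitrary boxes $B$). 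Neither of these is a "routine" cusp estimate; both are genuine heuristic inputs, and they are precisely why this remains a conjecture even modulo Goldfeld--Katz--Sarnak. Your proposal should present these equidistribution/independence hypotheses explicitly as assumptions rather than absorbing them silently into the phrase "fundamental-domain machinery."
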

In other words, we conjecture that a proportion of exactly 1/3 of
plane cubic curves over $\Z$ that are locally soluble actually possess
a rational point. We will explain the reasoning behind this conjecture
in Section~5.

Analogous conjectures can be formulated for the space of binary quartic
forms or the space of pairs of quaternary quadratic forms.  In these
two cases, we have the following conjecture.

\begin{conjecture}\label{conjpc3}
Let $V$ be the space of binary quartic forms {\em or} the space of pairs of quaternary quadratic forms.
Then for any $B$, we have
\begin{equation}
\liminf_{t\to\infty} \frac{N_\fail(V,t,B)}{N_\locsol(V,t,B)} = \frac34.
\end{equation}
\end{conjecture}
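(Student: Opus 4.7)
The plan to justify Conjecture~\ref{conjpc3} is to combine the average Selmer-group sizes of Theorem~\ref{selresults} with the Goldfeld--Katz--Sarnak conjecture that asymptotically half of all elliptic curves over $\Q$ have rank~$0$ and half have rank~$1$, and then to transport the resulting prediction from the ordering of elliptic curves by Jacobian height to the ordering of $V(\Z)$ by coefficient height via the same fundamental-domain covering argument sketched above for Theorems~\ref{main} and~\ref{main2}.

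For $V$ the space of binary quartic forms, a non-identity $2$-Selmer element of an elliptic curve $E/\Q$ corresponds, outside a negligible locus, to a unique $\GL_2(\Z)$-equivalence class of locally soluble integral binary quartics with Jacobian~$E$; such an element is soluble over~$\Q$ precisely when it lies in the image of $E(\Q)/2E(\Q)\hookrightarrow\Sel_2(E)$, and otherwise represents a non-identity class in~$\SH(E)[2]$. Theorem~\ref{selresults}(a) gives an average of~$2$ non-identity elements in $\Sel_2(E)$. Granting Goldfeld--Katz--Sarnak and the density-one vanishing of $E(\Q)[2]$, the average of $|E(\Q)/2E(\Q)|-1$ equals $\tfrac12\cdot 0+\tfrac12\cdot 1=\tfrac12$, so on average $\tfrac12$ of those non-identity Selmer elements are soluble while the remaining $\tfrac32$ fail the Hasse principle; the predicted ratio of failures among locally soluble non-identity classes is therefore $3/4$, matching Conjecture~\ref{conjpc3}.

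For $V$ the space of pairs of quaternary quadratic forms, the relevant Selmer objects are the $4$-Selmer elements of exact order~$4$, since $4$-Selmer elements killed by~$2$ generically correspond to $2$-coverings and thus to binary quartic presentations rather than to intersections of quadrics in~$\P^3$. Parts~(a) and~(c) of Theorem~\ref{selresults}, together with the generic isomorphism $\Sel_2(E)\simeq\Sel_4(E)[2]$ when $E(\Q)[2]=0$, then predict an average of $7-3=4$ such elements per elliptic curve. Under Goldfeld--Katz--Sarnak, rank-$0$ curves contribute $0$ soluble elements of exact order~$4$, while a rank-$1$ curve contributes the two order-$4$ generators of $E(\Q)/4E(\Q)\simeq\Z/4\Z$, so the expected soluble count is $\tfrac12\cdot 2=1$ and the expected count of Hasse-failing elements is $4-1=3$, again giving the predicted ratio $3/4$. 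The main obstacle to turning this heuristic into a theorem is twofold: one needs the full Goldfeld--Katz--Sarnak distribution (presently only upper bounds on the rank-$0$ and rank-$1$ densities are known), and one must check that these per-Jacobian Selmer averages concentrate enough to survive the fundamental-domain transfer from Jacobian height to coefficient height of \cite{TC,foursel,squarefree}---which is precisely why the statement must at present be phrased as a conjecture rather than a theorem.
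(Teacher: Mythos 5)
Your heuristic is essentially the same as the paper's: Section~5 derives the proportion $\phi(n)/(2n)$ of generic (exact order $n$) $n$-Selmer elements that are soluble from Theorem~\ref{selresults} plus Goldfeld--Katz--Sarnak (giving $1/4$ for both $n=2$ and $n=4$, hence a failure ratio of $3/4$), and then invokes the same fundamental-domain transfer, made precise there as two independence/equidistribution assumptions. Your per-case computations ($2$ non-identity $2$-Selmer elements with $1/2$ soluble on average; $7-3=4$ exact-order-$4$ elements with $1$ soluble on average) match the paper's uniform $\phi(n)/(2n)$ count exactly.
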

In other words, we conjecture that a proportion of exactly 1/4 of all locally soluble
equations of the form $y^2=f(x,z)$, where $f$ is a binary quartic form
over $\Z$, have a rational solution.  Similarly, a proportion of exactly 1/4 of all locally soluble equations of the form $Q(x,y,z,w)=Q'(x,y,z,w)$, where $Q$ and $Q'$ are quaternary quadratic forms over $\Z$, have a rational solution.   (That a positive proportion of such equations are locally soluble again follows from \cite{PV}, or indeed from Theorem~\ref{main2}.)  We will also explain the reasoning behind these conjectures in Section~5.

\section{A positive proportion of $n$-Selmer elements ($n\in\{2,3,4\}$) of elliptic curves, when ordered by the heights of these elliptic curves, do not possess a rational point}

For $n\in\{2,3,4\}$, we begin by showing that a positive proportion of $n$-Selmer elements of elliptic curves, when ordered by the heights of these elliptic curves, have no rational point.

\begin{theorem}\label{nselfail}
Let $n=2$, $3$, or $4$.  Then a positive proportion 
of $n$-Selmer elements of elliptic curves $(\ref{Eeq})$ over $\Q$, when ordered by height, do not possess a rational point. 
\end{theorem}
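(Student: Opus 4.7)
The plan is to bound from above the number of $n$-Selmer elements arising from rational points and to compare this bound with the average size of $\Sel_n(E)$ provided by Theorem~\ref{selresults}. Recall that the exact sequence $0 \to E(\Q)/nE(\Q) \to \Sel_n(E) \to \SH(E)[n] \to 0$ exhibits the Selmer elements with a rational point as precisely the image of $E(\Q)/nE(\Q)$, which has size $n^{r(E)}\,|E(\Q)[n]|$. For a density-one set of elliptic curves---those with no rational $n$-torsion---this count is simply $n^{r(E)}$. Hence it suffices to show that the average of $n^{r(E)}$ is strictly less than $c_n$, where $(c_2,c_3,c_4)=(3,4,7)$ is the sequence of $n$-Selmer averages from Theorem~\ref{selresults}.

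The key step is to invoke Theorem~\ref{selresults}(d), which gives $\sum_{E:\,H(E)\leq X} 5^{r(E)} \leq \sum_E \#\Sel_5(E) = 6\,N(X)(1+o(1))$, where $N(X)$ denotes the number of elliptic curves of height at most~$X$. For $n\in\{2,3,4\}$ the exponent $\alpha := \log_5 n$ satisfies $0 < \alpha < 1$, so $x \mapsto x^\alpha$ is concave on $[1,\infty)$, and Jensen's inequality yields
\[
\frac{1}{N(X)}\sum_E n^{r(E)} = \frac{1}{N(X)}\sum_E \bigl(5^{r(E)}\bigr)^{\alpha} \leq \biggl(\frac{1}{N(X)}\sum_E 5^{r(E)}\biggr)^{\alpha} \leq 6^{\log_5 n} + o(1).
\]
A direct numerical check shows $6^{\log_5 2} \approx 2.16 < 3$, $6^{\log_5 3} \approx 3.40 < 4$, and $6^{\log_5 4} \approx 4.68 < 7$. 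Thus in each of the three cases the average of $n^r$ is strictly less than $c_n$ by a positive amount.

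Combining: the count of $n$-Selmer elements arising from rational points is at most $(6^{\log_5 n} + o(1))\,N(X)$, while the total count is $(c_n + o(1))\,N(X)$ by Theorem~\ref{selresults}; the difference is a positive proportion of the total, giving the claim.

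The main obstacle is to handle the density-zero set of elliptic curves with nontrivial rational $n$-torsion, on which $|E(\Q)/nE(\Q)| = n^{r}\,|E(\Q)[n]|$ could in principle outpace $n^{r}$ when summed (since $n^r$ itself is unbounded). I would address this via H\"older's inequality, choosing an exponent $p < \log_n 5$ so that the $p$-th power average of $n^{r} = (5^r)^{1/p \cdot p \log_5 n}$ stays bounded by $6^{p\log_5 n}$ via Theorem~\ref{selresults}(d), and then combining with Mazur's uniform bound on $|E(\Q)[n]|$ together with the density-zero property of the exceptional set (a consequence of Hilbert irreducibility applied to the $n$-division polynomial). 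This yields $\sum_{E:\,|E(\Q)[n]|>1} n^{r}\,|E(\Q)[n]| = o(N(X))$, showing that the exceptional contribution is negligible and preserving the strict inequality throughout.
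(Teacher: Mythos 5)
Your proof is correct and follows essentially the same route as the paper: both deduce an upper bound on the average of $n^r$ from the $5$-Selmer average of Theorem~\ref{selresults}(d) via a convexity argument, and compare it with the $n$-Selmer average $\sigma(n)$. The only differences are that the paper uses the pointwise secant bound $n^r \leq \frac{(5^r-5)(n^2-n)}{20}+n$ (the chord through $r=1,2$) instead of your Jensen inequality, yielding the marginally sharper constants $2.1$, $3.3$, $4.6$ in place of $6^{\log_5 n}$, and that your H\"older treatment of the density-zero set of curves with rational $n$-torsion makes explicit a point the paper leaves implicit.
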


\begin{proof}
We use the fact that 
\begin{equation}\label{keyineq}
n^r \leq \frac{(5^r-5)(n^2-n)}{20}+n
\end{equation}
for any integer $r\geq 0$. Since the limsup of the average size of $5^r$, as $r$ ranges over ranks of elliptic curves over $\Q$ ordered by height, is at most 6, we conclude from (\ref{keyineq}) that the limsup of the average size of $n^r$ is at most $(n^2-n)/20+n$.  Since the average size of the $n$-Selmer group is 
$\sigma(n)$, of which at most $(n^2-n)/20+n$ can have rational points, we conclude that a lower density of at least 
$$
1-\frac{(n^2-n)/20+n}{\sigma(n)}=
\left\{ \begin{array}{cl} 
3/10 &\mbox{if $n=2$}\\
7/40 &\mbox{if $n=3$}\\
12/35 &\mbox{if $n=4$}
\end{array}\right.
$$
of $n$-Selmer elements, when ordered by the heights of their Jacobians, 
do not possess rational points.
\end{proof}

\noindent
Thus Theorem~\ref{nselfail} states that for $n=2$, 3, and 4, a lower density of at least $3/10$, $7/40$, and~$12/35$, respectively, of elements in the $n$-Selmer group of elliptic curves over $\Q$ correspond to nontrivial elements of the Tate--Shafarevich group.

Next, let us say that an element of the $n$-Selmer group of an elliptic curve is {\it generic} if it has order exactly $n$.  (The reason for the choice of the term ``generic'' will become clearer in~\S4.) 
Therefore, Theorem~\ref{selresults} states that, for $n\leq 5$, the average number of generic elements in the $n$-Selmer groups of elliptic curves, when ordered by the heights of these elliptic curves, is $n$. 

We will need the following analogue of Theorem~\ref{nselfail} for generic elements of the $n$-Selmer group:

\begin{theorem}\label{nselfail2}
Let $n=2$, $3$, or $4$.  Then a positive proportion 
of generic $n$-Selmer elements of elliptic curves $(\ref{Eeq})$ over $\Q$, when ordered by height, do not possess a rational point. 
\end{theorem}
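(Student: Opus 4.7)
The plan is to mimic the proof of Theorem~\ref{nselfail}, but restrict attention to the generic parts of $E(\Q)/nE(\Q)$ and of the $n$-Selmer group. The key input remains the $5$-Selmer average of $6$ from Theorem~\ref{selresults}(d), combined with the elementary inequality~(\ref{keyineq}), which yields the upper bound $\overline{n^r}\leq (n^2-n)/20+n$ for $n\in\{2,3,4\}$, where the bar denotes the average as $E$ ranges over elliptic curves ordered by height. Numerically, $\overline{2^r}\leq 21/10$, $\overline{3^r}\leq 33/10$, and $\overline{4^r}\leq 23/5$.

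First I would recall that, by the discussion immediately preceding Theorem~\ref{nselfail2}, the average number of generic elements of the $n$-Selmer group is exactly $n$ for $n\in\{2,3,4\}$.  Thus it suffices to bound the average number of generic $n$-Selmer elements lying in the image of the injection $E(\Q)/nE(\Q)\hookrightarrow S_n(E)$, and show that this average is strictly less than $n$.  Next I would invoke the theorem of Duke (that $100\%$ of elliptic curves ordered by height have trivial rational torsion) to reduce to the case $E(\Q)_{\mathrm{tors}}=0$, where $E(\Q)/nE(\Q)\cong (\Z/n\Z)^r$. The number of elements of exact order $n$ in this group is $n^r-1$ for $n\in\{2,3\}$, and $4^r-2^r$ for $n=4$. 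Averaging and using the bounds on $\overline{n^r}$, together with the pointwise lower bound $2^r\geq 1$ in the $n=4$ case, one checks readily that each of these averages is strictly less than $n$, yielding positive lower densities (of generic $n$-Selmer elements without rational points) of $9/20$, $7/30$, and $1/10$ for $n=2,3,4$, respectively.

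The main technical point I anticipate is handling the density-$0$ subfamily of elliptic curves with nontrivial rational torsion, for which $|E(\Q)/nE(\Q)|$ can exceed $n^r$ by the bounded factor $|E[n](\Q)|$. Since this subfamily has density $0$, the number of generic rational-point-having elements per curve is bounded above by $|S_n(E)|$, and the average $\overline{|S_n|}=\sigma(n)$ is finite, a standard truncation-and-Fatou argument (already implicit in the proof of Theorem~\ref{nselfail}) should absorb this contribution and ensure that the limsup computations above go through over the full family, completing the proof.
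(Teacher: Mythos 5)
Your proposal is correct and follows essentially the same route as the paper: reduce to counting order-$n$ elements of $E(\Q)/nE(\Q)$ inside the generic part of the Selmer group, whose average size is $n$, and bound their average via the $5$-Selmer average of $6$. The only difference is in the case $n=4$, where the paper uses the sharper inequality $4^r-2^r\leq(5^r-1)/2$ to obtain the lower density $3/8$ rather than your $1/10$ (obtained from $\overline{4^r}-1\leq 18/5$); both suffice for positivity, and your explicit handling of the density-zero torsion subfamily is fine (the paper leaves it implicit).
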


\begin{proof}
For $n=2$ or 3, this follows immediately from Theorem~\ref{nselfail}, since the only non-generic element of the $n$-Selmer group in this case is the identity element which always possesses a rational point.  By the identical argument as in Theorem~\ref{nselfail}, the proportion of generic $n$-Selmer elements not possessing a rational point is at least
$$
1-\frac{(n^2-n)/20+n-1}{n}=
\left\{ \begin{array}{cl} 
9/20 &\mbox{if $n=2$}\\
7/30 &\mbox{if $n=3$}.
\end{array}\right.
$$

When $n=4$, we use the fact that 
\begin{equation}\label{keyineq2}
4^r-2^r \leq \frac{(5^r-1)}{2}
\end{equation}
for any integer $r\geq 0$. Since the limsup of the average size of $5^r$, as $r$ ranges over ranks of elliptic curves over $\Q$ ordered by height, is at most 6, we conclude from (\ref{keyineq2}) that the limsup of the average size of $4^r-2^r$ is at most $5/2$.  Since the average number of generic elements in the $4$-Selmer group is 4, we conclude that  
a lower density of at least $1-(5/2)/4=3/8$ of generic 4-Selmer elements do not possess a rational point.
\end{proof}

\noindent
In other words, Theorem~\ref{nselfail2} states that for $n=2$, 3, or 4, a lower density of at least $9/20$, $7/30$, or~$3/8$, respectively, of generic elements in the $n$-Selmer group correspond to nontrivial elements of the Tate--Shafarevich group.

\section{A positive proportion of nonidentity $n$-Selmer elements ($n\in\{2,3,4,5\}$) of elliptic curves, when ordered by the heights of these elliptic curves, have a rational point}

For $n\in\{2,3,4,5\}$, we next show that a positive proportion of {\it non-identity} $n$-Selmer elements of elliptic curves, when ordered by the heights of these elliptic curves, have a rational point. 

\begin{theorem}\label{nselsat}
Let $n=2$, $3$, $4$, or $5$.  A positive proportion 
of nonidentity $n$-Selmer elements of elliptic curves $(\ref{Eeq})$ over $\Q$, when ordered by height, possess a rational point. 
\end{theorem}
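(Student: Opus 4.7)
The plan is to combine Theorem~\ref{cs} with the Kummer injection $E(\Q)/nE(\Q) \hookrightarrow \Sel_n(E)$ and the average Selmer group sizes from Theorem~\ref{selresults}. By Theorem~\ref{cs}, a positive proportion $c > 0$ of elliptic curves $E = E_{A,B}$ over $\Q$, when ordered by height, have Mordell--Weil rank exactly~$1$. For any such $E$, the group $E(\Q)/nE(\Q)$ has order at least $n$, since $E(\Q)$ contains a free $\Z$-summand; and the image of this quotient under Kummer in $\Sel_n(E)$ consists, by construction, of Selmer classes represented by rational points of $E$. So each rank-one $E$ contributes at least $n - 1$ non-identity $n$-Selmer classes possessing a rational point.

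The counting argument is then immediate. Let $N(X)$ denote the number of elliptic curves of height at most $X$. By Theorem~\ref{selresults}, the average size of $\Sel_n(E)$ as $E$ varies over curves of height at most $X$ is $\sigma(n) \in \{3, 4, 7, 6\}$ for $n \in \{2, 3, 4, 5\}$ respectively; hence the total number of non-identity $n$-Selmer elements among these curves is asymptotically $(\sigma(n) - 1)\,N(X)$. Meanwhile, the previous paragraph shows that at least $c(n-1)\,N(X) + o(N(X))$ of these non-identity $n$-Selmer elements possess a rational point. Therefore the lower density of non-identity $n$-Selmer elements having a rational point, when the associated elliptic curves are ordered by height, is at least
\[
\frac{c(n-1)}{\sigma(n) - 1} \;>\; 0.
\]

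Granted Theorems~\ref{cs} and~\ref{selresults}, the argument contains no genuine obstacle; it amounts to a one-step averaging. The only point requiring minor care is the bookkeeping of the ordering: both numerator and denominator must be computed by ordering the underlying elliptic curve by height (rather than, say, by a height function attached to the Selmer class itself), so that the denominator is governed by Theorem~\ref{selresults}(a)--(d). The substance of the proof is thus concentrated entirely in Theorem~\ref{cs}, the deep rank-one result of \cite{rankone}; everything else is a direct consequence of averaging the Kummer map over rank-one curves.
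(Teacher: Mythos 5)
Your argument is correct and is essentially the paper's own proof: the paper likewise reduces everything to \cite{rankone} (the positivity of the average rank, equivalently Theorem~\ref{cs}), observes that the average of $n^r-1$ over curves ordered by height is then bounded below by some $c>0$, and divides by the average number $\sigma(n)-1$ of non-identity Selmer elements to get the lower density $c/(\sigma(n)-1)$ of classes in the image of the Kummer map. The only cosmetic difference is that the paper averages $n^r-1$ over all curves rather than restricting attention to the rank-one curves.
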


\begin{proof}
We use the main result of \cite{rankone}, which states that the liminf of the average rank of elliptic curves, when ordered by height, is strictly positive.  It follows that the liminf of the average of $n^r-1$, as $r$ ranges over ranks of elliptic curves ordered by height, is at least $c$ for some constant~$c>0$.  We conclude that the proportion of nonidentity elements of the $n$-Selmer group that possess a rational point is at least $c/(\sigma(n)-1)>0$.
\end{proof}

\noindent
Thus Theorem~\ref{nselsat} states that for $n=2$, 3, 4, and 5, a positive proportion 
of nonidentity elements in the $n$-Selmer group correspond to the identity element of the Tate--Shafarevich group.

As in the previous section, we will need the following analogue of Theorem~\ref{nselsat} for generic elements of the $n$-Selmer group; the proof is identical (except that for $n=4$ we argue that the liminf of the average of $4^r-2^r$, instead of $4^r-1$, is strictly positive). 

\begin{theorem}\label{nselsat2}
Let $n=2$, $3$, $4$, or $5$.  A positive proportion 
of generic $n$-Selmer elements of elliptic curves $(\ref{Eeq})$ over $\Q$, when ordered by height, possess a rational point. 
\end{theorem}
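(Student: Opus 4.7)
The plan is to mimic the proof of Theorem~\ref{nselsat} essentially verbatim, with a single modification when $n=4$, exactly as the parenthetical remark preceding the statement indicates. Throughout, I may restrict to the density-one subset of elliptic curves (ordered by height) having trivial rational torsion, so that the Kummer injection $E(\Q)/nE(\Q)\hookrightarrow \Sel_n(E)$ embeds $(\Z/n\Z)^r$ into $\Sel_n(E)$, where $r$ is the Mordell--Weil rank. Since this is a group homomorphism, elements of order exactly~$n$ on the left correspond to generic Selmer elements on the right that possess a rational point.

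For $n\in\{2,3,5\}$, primality of $n$ forces every nonidentity element of $E(\Q)/nE(\Q)$ to have order exactly~$n$. Hence ``nonidentity Selmer element with a rational point'' and ``generic Selmer element with a rational point'' coincide in the image of Kummer, and Theorem~\ref{nselsat} combined with the fact (Theorem~\ref{selresults}) that the average number of generic $n$-Selmer elements equals~$n$ immediately gives a positive lower density for the latter.

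For $n=4$, I would instead count the elements of $(\Z/4\Z)^r$ of order exactly~$4$, which is $4^r-2^r$. By Theorem~\ref{cs}, a positive proportion $\delta>0$ of elliptic curves have rank exactly~$1$, and for any such curve $4^r-2^r=2$; thus the liminf of the average of $4^r-2^r$ is at least~$2\delta>0$. Dividing by the average number $4$ of generic $4$-Selmer elements (Theorem~\ref{selresults}(c)) yields a lower density of at least $\delta/2>0$ of generic $4$-Selmer elements with a rational point. I do not anticipate any real obstacle here: the argument is a one-line modification of Theorem~\ref{nselsat}, with the inequality $4^r-2^r>0$ for $r\geq 1$ playing the role that $n^r-1>0$ played there, and the only minor bookkeeping item is confirming that nontrivial $2$-torsion is negligible so that $E(\Q)/4E(\Q)\cong(\Z/4\Z)^r$ holds generically.
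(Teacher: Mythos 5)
Your proposal is correct and matches the paper's (very brief) argument essentially verbatim: the paper likewise observes that for prime $n$ nonidentity and generic coincide so Theorem~\ref{nselsat} applies directly, and for $n=4$ replaces $4^r-1$ with $4^r-2^r$ and uses Theorem~\ref{cs} to get a positive liminf average. Your extra bookkeeping about restricting to curves with trivial rational torsion is a reasonable (and correct) explicit version of a point the paper leaves implicit.
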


\noindent
In other words, Theorem~\ref{nselsat2} states that for $n=2$, 3, 4,
and 5, a positive proportion 
of generic elements in the $n$-Selmer group correspond to the identity element of the Tate--Shafarevich group.

\section{A positive proportion of genus one models of degree $n\in\{2,3,4\}$ fail the Hasse principle; and a positive proportion nontrivially satisfy the Hasse principle (Proofs of Theorems 1 and 2)}

Let $n=2$, 3, or 4, and let $V$ again denote the space of binary quartic forms, ternary cubic forms, or pairs of quaternary quadratic forms, respectively. Then for a field $K$, the $K$-vector space $V(K)$ has dimension $m=5$, 10, or 20, respectively.  The group $G$ naturally acts on $V$, where $G$ is defined in the following manner. 

\begin{itemize}
\item[$n=2:$\!\!\!\!\!\!\!\!]\quad\,
$G=\PGL_2$.  Note that $\gamma\in\GL_2$ naturally acts on a binary quartic form
$f(x,y)\in V$ by $$\gamma\cdot f(x,y)=(\det \gamma)^{-2}f((x,y)\cdot\gamma),$$
inducing an action of $\PGL_2$ on $V$.

\item[$n=3:$\!\!\!\!\!\!\!\!]\quad\,
$G=\PGL_3$. In this case, $\gamma\in\GL_3$ naturally acts on a ternary cubic form
$f(x,y,z)\in V$ by $$\gamma\cdot f(x,y,z)=(\det \gamma)^{-1}f((x,y,z)\cdot\gamma),$$
inducing an action of $\PGL_3$ on $V$.

\item[$n=4:$\!\!\!\!\!\!\!\!]\quad\,
$G=\{(\gamma_2,\gamma_4)\in\GL_2\times\GL_4:\det(\gamma_2)\det(\gamma_4)=1\}/
\{(\lambda^{-2}I_2,\lambda I_4)\},$
where $I_2$ and $I_4$ denote the identity elements of $\GL_2$ and $\GL_4$, and
$\lambda\in\mathbb G_m$.

\end{itemize}

For any field $K$ of characteristic prime to 6, an element of $v\in V(K)$ 
naturally yields a genus one normal curve $C(v)$ over $K$ of degree~$n$.  If $n=2$, we say that an element of $V(K)$ is {\it generic} if $C(v)$ is smooth and none of the four ramification points (when $C(v)$ is viewed as a double cover of $\P^1$) are  $K$-rational
(i.e., the corresponding binary quartic form has no linear factor over
$K$).  If $n=3$, we define an element of $V(K)$ to be {\it generic} if $C(v)$ is smooth and does not have a $K$-rational flex point  in $\P^2$. Finally, if $n=4$, then we say that an element $(Q,Q')\in V(K)$ is {\it generic} if $C(v)$ is smooth and the resolvent binary quartic form $\det(Qx+Q'y)$ does not have a linear factor over $K$.

The genus one curves associated to elements of $V$ are closely related
to the fundamental polynomial invariants for the action of $G$ on $V$; see the paper of Fisher~\cite{Fisher1} for these and many related results. 
Indeed, it is classical that the ring of polynomial invariants for the
action of $G(\C)$ on elements $v$ of $V(\C)$ is freely generated by
two invariants (see, e.g., \cite{Fisher1} for explicit constructions),
which we denote by $A=A(v)$ and $B=B(v)$, respectively.  We may choose
these generators so that they are polynomials with coefficients in
$\Q$ (with denominators involving only the primes 2 and 3), so that
the Jacobian $E(v)$ of $C(v)$ is given by
\begin{equation}\label{jacformula}
E_{A,B}:y^2=x^3+Ax+B,
\end{equation}
and so that the following statements and theorems hold.

First, we note that there is a discriminant polynomial $\Delta$ of degree~$d=6m/5$ on~$V$ (whose nonvanishing detects
stable orbits on~$V$) which coincides with the discriminant of
the associated Weierstrass equation~(\ref{jacformula}) of~$E(v)$.  We say that an element of $V(K)$ is 
{\it nondegenerate} if it has nonzero discriminant.
We say that a nondegenerate element~$v\in V(K)$ (or the associated genus one 
curve $C(v)$) is {\it $K$-soluble} if $C(v)$ has a $K$-rational point. 
Then:

\begin{theorem}\label{propparamfield} 
  Let $K$ be a field having characteristic not $2$ or $3$, and let
  $E_{A,B}:y^2=x^3+Ax+B$ be an elliptic curve over $K$.
  Then there exists a bijection between elements in $E(K)/nE(K)$ and
  $G(K)$-orbits of $K\!$-soluble elements in $V(K)$ having invariants
  equal to $A$ and $B$. Under this bijection, a $G(K)$-orbit $G(K)\cdot v$ corresponds to
  an element in $E(K)/nE(K)$ of order exactly $n$ if and only if
  $v$ is a generic element of $V(K)$.

  Furthermore, the stabilizer in $G(K)$ of any $($not necessarily
  $K\!$-soluble$)$ element in $V(K)$, having nonzero
  discriminant and invariants $A$ and $B$, is isomorphic to $E_{A,B}(K)[n]$.
\end{theorem}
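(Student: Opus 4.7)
The plan is to establish the bijection via Galois cohomology (descent/twisting), following the classical framework developed by Fisher and others referenced in the paper. I would treat the three values $n=2,3,4$ uniformly by exploiting the fact that in each case there is a distinguished element $v_0\in V(K)$ with invariants $A,B$ coming from the identity $n$-covering $E_{A,B}\to E_{A,B}$ (for $n=2$: the binary quartic associated to the $x$-coordinate map; for $n=3$: a Weierstrass embedding cubic obtained by anticanonical embedding of $E$; for $n=4$: the intersection of quadrics obtained from the complete linear system $|4\cdot O|$).

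First, I would compute the stabilizer of such a $v_0$. Over $\bar K$, a direct calculation in each case shows $\Stab_G(v_0)\cong E_{A,B}[n]$ as group schemes over $K$ (the action of translation by an $n$-torsion point on $E$ extends to an automorphism of the corresponding model preserving the invariants, and these exhaust the stabilizer). Moreover, all elements of $V(\bar K)$ with given invariants $A,B$ lie in a single $G(\bar K)$-orbit (this is the key stable-orbit fact from classical invariant theory, as $\Delta\neq 0$ implies stability), so every stabilizer is an inner form of $E[n]$; but since $E[n]$ is abelian, inner twisting is trivial, and so the stabilizer of every nondegenerate $v\in V(K)$ with invariants $A,B$ is $K$-isomorphic to $E_{A,B}[n]$. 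Taking $K$-points gives the final clause of the theorem.

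Second, I would apply descent. The $G(K)$-orbits of elements of $V(K)$ with invariants $A,B$ that become $G(\bar K)$-equivalent to $v_0$ are classified by $\ker(H^1(K,\Stab_G(v_0))\to H^1(K,G))$. Since $G$ is a form of $\PGL_n$-type with $H^1$ classifying Brauer--Severi varieties, the condition for a class in $H^1(K,E[n])$ to lie in this kernel is precisely that the associated twist of $C(v_0)$ be a genus one curve with a $K$-rational point, i.e., $K$-soluble. Using the Kummer exact sequence
\begin{equation*}
0\to E_{A,B}(K)/nE_{A,B}(K)\to H^1(K,E_{A,B}[n])\to H^1(K,E_{A,B})[n]\to 0,
\end{equation*}
the kernel coincides with the image of $E_{A,B}(K)/nE_{A,B}(K)$, since $H^1(K,E_{A,B})[n]$ classifies $n$-coverings up to isomorphism and the soluble ones are exactly those coming from $E_{A,B}(K)/nE_{A,B}(K)$. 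This gives the desired bijection.

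Third, for the \emph{order exactly $n$} statement, I would analyze the non-generic locus geometrically. An element $v\in V(K)$ is non-generic iff $C(v)$ admits a $K$-rational structure that reduces it to a model of lower degree (a linear factor of the binary quartic for $n=2$; a flex point for $n=3$; a linear factor of the resolvent quartic for $n=4$). In each case this reduction corresponds exactly to the $n$-covering factoring through an $m$-covering for some proper divisor $m\mid n$, i.e., the associated class in $E(K)/nE(K)$ having order strictly less than $n$. The main obstacle will be verifying this equivalence cleanly in the $n=4$ case, where the resolvent quartic's splitting behavior must be matched precisely with the $2$-torsion subgroup of $E(K)/4E(K)$; this is where I would lean most heavily on the explicit invariant-theoretic description in Fisher's work~\cite{Fisher1}, which already handles these correspondences in the required normalization.
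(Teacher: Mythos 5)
Your overall strategy (twisting/Galois descent with distinguished orbit $v_0$, the Kummer sequence, and the abelian-stabilizer trick to untwist stabilizers) is exactly the standard framework of the references the paper cites for this theorem --- the paper itself gives no proof, only pointing to \cite{MMM}, \cite{Fisher1}, \cite{CFS}, \cite{BH} and \cite{BS,TC,foursel}. Your first step (stabilizers) and third step (generic $\Leftrightarrow$ order exactly $n$) are fine as sketches.

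However, there is a genuine error in your second step. You assert that a class in $H^1(K,E[n])$ lies in $\ker\bigl(H^1(K,E[n])\to H^1(K,G)\bigr)$ precisely when the associated twist of $C(v_0)$ has a $K$-rational point, and hence that this kernel equals the image of $E(K)/nE(K)$. That is false: the kernel classifies \emph{all} $G(K)$-orbits on nondegenerate elements of $V(K)$ with invariants $A,B$, i.e., the classes whose Brauer obstruction vanishes (the curve embeds in $\P^{n-1}$ over $K$ as a model of the right type), which is strictly weaker than possessing a $K$-point. If your claim were correct, every nondegenerate ternary cubic over $\Q$ with given invariants would be soluble, contradicting Selmer's example and rendering the entire paper vacuous; it would also make Theorem~\ref{propparamq} (the bijection of \emph{locally soluble} orbits with the full $n$-Selmer group) inconsistent with Theorem~\ref{propparamfield}. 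The correct argument separates two statements: (i) all nondegenerate $G(K)$-orbits with invariants $A,B$ inject into $H^1(K,E[n])$ with image the kernel above; (ii) within that set, an orbit is $K$-soluble if and only if its class dies in $H^1(K,E)$, i.e., lies in the image of $E(K)/nE(K)$ under the Kummer map --- and one must separately check that this image is contained in the kernel, which follows from the explicit construction of a model in $V(K)$ from each point $P\in E(K)$ (via the complete linear system attached to the covering $Q\mapsto nQ+P$, which is defined over $K$ because the covering has a rational point). With that repair your outline matches the cited proofs.
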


\noindent
Theorem~\ref{propparamfield} follows from \cite[Theorem~3.1]{MMM} (see
also \cite{Fisher1}, \cite{CFS}, and \cite[\S4]{BH}; or
\cite[\S3.1]{BS}, \cite[\S3.1]{TC}, and \cite[\S2]{foursel}).

A nondegenerate element $v\in V(\Q)$ (or the associated genus one curve $C(v)$) is said to be {\it locally soluble} if it
is $\R$-soluble and $\Q_p$-soluble for all primes $p$. We
similarly then have the following theorem:

\begin{theorem}\label{propparamq}
  There is a bijection between elements in the
  $n$-Selmer group of the elliptic curve $E_{A,B}$ over $\Q$ and $G(\Q)$-orbits on locally soluble
  elements in $V(\Q)$ having invariants equal to $A$ and~$B$.
  Furthermore, if $v\in V(\Q)$ is locally soluble and has invariants $A$ and $B$, then the
  $G(\Q)$-orbit $G(\Q)\cdot v$ corresponds to a generic $n$-Selmer element of $E_{A,B}$
  if and only if $v$ is generic.
\end{theorem}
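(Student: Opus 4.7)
The plan is to deduce Theorem~\ref{propparamq} from the field-theoretic Theorem~\ref{propparamfield} by Galois descent, applied both globally over $\Q$ and at every completion, and then assembled through the very definition of the $n$-Selmer group.

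First I would apply Theorem~\ref{propparamfield} over $\bar\Q$: since $E_{A,B}(\bar\Q)/nE_{A,B}(\bar\Q)=0$, the locus of nondegenerate $v\in V(\bar\Q)$ with invariants $A,B$ forms a single $G(\bar\Q)$-orbit, and the stabilizer of any representative is $E_{A,B}[n]$. Taking as base point the distinguished $\Q$-rational representative corresponding to the identity $n$-Selmer class, a standard twisting argument parameterizes the set of $G(\Q)$-orbits on nondegenerate $v\in V(\Q)$ with invariants $A,B$ by classes in $H^1(\Q, E_{A,B}[n])$; the auxiliary obstruction in $H^1(\Q, G)$ vanishes for the classes at hand, thanks to Hilbert~90 and the structure of $G$ as a quotient involving only general and projective linear factors. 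Running the same argument verbatim over each $\Q_\nu$ identifies, by Theorem~\ref{propparamfield}, the $\Q_\nu$-soluble $G(\Q_\nu)$-orbits with the image of the Kummer map $E_{A,B}(\Q_\nu)/nE_{A,B}(\Q_\nu)\hookrightarrow H^1(\Q_\nu,E_{A,B}[n])$. Consequently, a $G(\Q)$-orbit on $V(\Q)$ is locally soluble iff its class in $H^1(\Q,E_{A,B}[n])$ restricts into the local Kummer image at every place~$\nu$, which by definition picks out $\Sel_n(E_{A,B})$. This yields the desired bijection.

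For the generic statement, Theorem~\ref{propparamfield} already asserts over each $K$ that a $K$-soluble orbit $G(K)\cdot v$ is generic iff the corresponding class in $E_{A,B}(K)/nE_{A,B}(K)$ has order exactly $n$. Since genericity of $v$ is a geometric condition on $C(v)$ that is $G(\bar\Q)$-invariant and compatible with all localizations, a locally soluble $G(\Q)$-orbit is generic iff the associated Selmer class has order exactly $n$ in $H^1(\Q, E_{A,B}[n])$, i.e., iff it is a generic $n$-Selmer element.

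The principal obstacle, I expect, is the descent step that promotes the formal parameterization by $H^1(\Q, E_{A,B}[n])$ into an honest bijection with $G(\Q)$-orbits on $V(\Q)$: one must check that every locally soluble Selmer class is realized by an actual element of $V(\Q)$, equivalently, that the relevant $H^1(\Q,G)$-obstruction vanishes. For the three spaces $V$ at hand, this is precisely what the explicit $n$-descent constructions of Cremona, Fisher, O'Neil, Simon, and Stoll cited immediately after Theorem~\ref{propparamfield} provide; the content of Theorem~\ref{propparamq} is then the packaging of those results with the field-level bijection of Theorem~\ref{propparamfield}.
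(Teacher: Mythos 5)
Your approach is essentially the paper's: the paper offers no independent argument for Theorem~\ref{propparamq}, simply deducing it ``similarly'' from Theorem~\ref{propparamfield} via the cited descent literature (\cite{CFS}, \cite{Fisher1}, and \S3.1 of \cite{BS}, \cite{TC}), and your proposal is a correct unpacking of what that deduction involves: apply the field-level bijection over $\Q$ and over every $\Q_\nu$, and match the local solubility condition with the defining condition of the Selmer group. You also correctly isolate the one genuinely nontrivial step, namely that every Selmer class is actually realized by an element of $V(\Q)$.

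One point of your write-up is misstated, though you recover from it. You assert early on that the auxiliary obstruction in $H^1(\Q,G)$ vanishes ``thanks to Hilbert~90 and the structure of $G$.'' Hilbert~90 kills $H^1(\Q,\GL_n)$, but for $G=\PGL_2$, $\PGL_3$, or the $n=4$ group, $H^1(\Q,G)$ is \emph{not} trivial---it maps onto $n$-torsion in the Brauer group---so the obstruction map $H^1(\Q,E[n])\to H^1(\Q,G)$ has genuinely nonzero image in general. The correct reason the obstruction vanishes on Selmer classes is that such a class is everywhere locally soluble, hence its obstruction class is everywhere locally trivial, hence globally trivial by the Brauer--Hasse--Noether local-global principle; this is precisely the content of the realization results of Cremona--Fisher--O'Neil--Simon--Stoll that you invoke in your final paragraph. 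Since you ultimately defer to exactly those results for this step, the argument stands, but the intermediate appeal to Hilbert~90 should be replaced by the local-global argument just described.
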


We are of course interested in elements of $V(\Z)$.  
By the work of Cremona, Fisher, and Stoll~\cite[Theorem~1.1]{CFS}, any
locally soluble element $v\in V(\Q)$ having integral invariants $A$
and $B$ is $G(\Q)$-equivalent to an integral
element $v'\in V(\Z)$ having the same invariants $A$ and $B$. This yields the following theorem:

\begin{theorem}\label{propselparz}
  Let $E_{A,B}$ be an elliptic curve over $\Q$, where $A,B\in
  \Z$. Then the elements in the $n$-Selmer
  group of $E_{A,B}$ are in bijective correspondence with $G(\Q)$-equivalence
  classes on the set of locally soluble elements in $V(\Z)$ having
  invariants equal to $A$ and $B$. 
Furthermore, under this correspondence, generic elements of the
$n$-Selmer group of $E_{A,B}$ correspond to $G(\Q)$-equivalence
classes of generic locally soluble elements $v\in V(\Z)$ having invariants $A$ and $B$.
\end{theorem}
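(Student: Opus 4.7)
The plan is to deduce Theorem~\ref{propselparz} by combining the rational-orbit parametrization of Theorem~\ref{propparamq} with the integrality theorem of Cremona--Fisher--Stoll~\cite[Theorem~1.1]{CFS}. Theorem~\ref{propparamq} already gives a bijection between the $n$-Selmer group of $E_{A,B}$ and the set of $G(\Q)$-orbits on locally soluble elements of $V(\Q)$ with invariants $A$ and $B$. What remains is to verify that every such $G(\Q)$-orbit meets $V(\Z)$, and that passing between rational and integral representatives neither collapses nor creates equivalence classes.

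First I would construct the map in one direction. Given an $n$-Selmer element of $E_{A,B}$, apply Theorem~\ref{propparamq} to obtain a $G(\Q)$-orbit $G(\Q)\cdot v$ on locally soluble elements of $V(\Q)$ with invariants $(A,B)$. Since $A,B\in\Z$, \cite[Theorem~1.1]{CFS} produces a representative $v'\in V(\Z)$ in the same $G(\Q)$-orbit, still locally soluble and with the same invariants $(A,B)$. Associate to the Selmer element the $G(\Q)$-equivalence class of $v'$ within $V(\Z)$; well-definedness is automatic, because any other rational representative of the same Selmer class lies in the same $G(\Q)$-orbit on $V(\Q)$, and hence in the same class once restricted to $V(\Z)$.

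For the inverse direction, the inclusion $V(\Z)\hookrightarrow V(\Q)$ induces a map from $G(\Q)$-equivalence classes of locally soluble integral elements with invariants $(A,B)$ to $G(\Q)$-orbits on locally soluble rational elements with those invariants. This map is injective, since two integral elements that are $G(\Q)$-equivalent as rational elements are by definition $G(\Q)$-equivalent on the integral side; and the previous paragraph shows it is surjective. Composing with the bijection of Theorem~\ref{propparamq} yields the desired correspondence between the $n$-Selmer group of $E_{A,B}$ and $G(\Q)$-equivalence classes of locally soluble elements of $V(\Z)$ having invariants $A$ and $B$.

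Finally, the genericity statement transfers automatically. Genericity of $v\in V(\Q)$ is defined in terms of smoothness of $C(v)$ together with the (non)existence of certain $\Q$-rational auxiliary points or linear factors, all of which are preserved under the $G(\Q)$-action; so genericity is a property of the $G(\Q)$-orbit. Hence the generic-to-generic part of Theorem~\ref{propparamq} passes through the construction above unchanged. The only non-trivial input is the integrality step from \cite{CFS}; once granted, the remainder is bookkeeping and I anticipate no substantive obstacle.
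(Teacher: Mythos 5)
Your proposal is correct and follows exactly the paper's route: the paper derives Theorem~\ref{propselparz} by combining the rational-orbit bijection of Theorem~\ref{propparamq} with the Cremona--Fisher--Stoll integrality result \cite[Theorem~1.1]{CFS}, which is precisely your argument. The paper leaves the bookkeeping (well-definedness, injectivity/surjectivity of the induced map on classes, and the $G(\Q)$-invariance of genericity) implicit, whereas you spell it out; there is no substantive difference.
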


We conclude:

\begin{corollary}\label{cc}
Let $E=E_{A,B}$ be an elliptic curve over $\Q$, where $A,B\in\Z$. Then there is an injective map $\phi$ from the $n$-Selmer group $S_n(E)$ of $E$ to the set of $G(\Z)$-orbits on $V(\Z)$ having invariants given by $A$ and $B$.  The inverse image under $\phi$ of non-generic elements of $V(\Z)$ having invariants $A$ and $B$  consists precisely of the non-generic  elements of $S_n(E)$.
\end{corollary}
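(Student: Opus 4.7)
The plan is to deduce Corollary \ref{cc} as an essentially formal consequence of Theorem \ref{propselparz}, using only the fact that $G(\Z) \subseteq G(\Q)$ and that the notion of genericity is invariant under the $G(\Q)$-action.

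First, I would construct the map $\phi$. By Theorem \ref{propselparz}, each element $\sigma \in S_n(E_{A,B})$ corresponds to a unique $G(\Q)$-equivalence class $[v]_{G(\Q)}$ of locally soluble elements $v \in V(\Z)$ having invariants $A$ and $B$. Since $G(\Z) \subseteq G(\Q)$, the $G(\Q)$-orbit of $v$ on $V(\Z)$ decomposes as a (possibly nontrivial) disjoint union of $G(\Z)$-orbits. I would define $\phi(\sigma)$ to be any one of these $G(\Z)$-orbits; more canonically, one can take $\phi$ to be the multivalued map sending $\sigma$ to the set of $G(\Z)$-orbits inside $[v]_{G(\Q)}$, and then restrict to a single-valued section. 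Either convention yields a well-defined map $\phi \colon S_n(E) \to \{G(\Z)\text{-orbits on }V(\Z)\text{ with invariants }A,B\}$.

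Next I would verify injectivity. Suppose $\phi(\sigma_1)$ and $\phi(\sigma_2)$ are equal as $G(\Z)$-orbits. Then the two chosen representatives lie in a common $G(\Z)$-orbit, hence \emph{a fortiori} in a common $G(\Q)$-orbit. By Theorem \ref{propselparz} (the bijectivity part), the $G(\Q)$-orbits attached to $\sigma_1$ and $\sigma_2$ must therefore coincide, forcing $\sigma_1 = \sigma_2$. This step should be entirely routine.

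Finally, I would handle the claim about non-generic elements. The condition that $v \in V(\Z)$ is non-generic (e.g., that the associated genus one curve fails to be smooth with the specified rationality conditions on flex points, ramification, or resolvent factors) is intrinsic to the curve $C(v)$ and hence is invariant under the full $G(\Q)$-action; in particular, it is a property of the $G(\Z)$-orbit. Consequently, the $G(\Z)$-orbit $\phi(\sigma)$ consists of non-generic elements if and only if the ambient $G(\Q)$-orbit does, which by the genericity statement in Theorem \ref{propselparz} happens if and only if $\sigma$ itself is a non-generic element of $S_n(E)$. This gives the desired identification of the preimage.

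There is no real obstacle to overcome here: the only subtlety is the mild one that a single $G(\Q)$-orbit on $V(\Z)$ may split into several $G(\Z)$-orbits, but this affects neither the existence nor the injectivity of $\phi$, and the genericity property passes cleanly between $G(\Q)$- and $G(\Z)$-orbits because it depends only on the underlying curve $C(v)$. The corollary is therefore essentially a bookkeeping consequence of the parametrization results recalled just above.
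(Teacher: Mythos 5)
Your proposal is correct and matches the paper's intent: the corollary is stated there as an immediate consequence of Theorem~\ref{propselparz} (with no separate argument given), obtained exactly as you do by choosing a $G(\Z)$-orbit inside each $G(\Q)$-equivalence class of locally soluble integral elements, with injectivity and the genericity statement following from the bijectivity and $G(\Q)$-invariance of genericity. No gaps.
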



Now let $F$ be a fundamental domain for the action of $G(\Z)$ on the set of nondegenerate elements of $V(\R)$.  
The following theorem was proven in~\cite[Theorem~1.6]{BS}, \cite[Theorem~8]{TC}, and \cite[Theorem~10]{foursel}:

\begin{theorem}\label{bs}
There exists a constant $c_1>0$ such that the number of generic elements of $V(\Z)$ in~$F$ with $|A|^3<t^d$ and $|B|^2<t^d$ is $c_1t^m+o(t^m)$.
\end{theorem}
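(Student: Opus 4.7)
The plan is to follow the geometry-of-numbers / orbit-counting framework of \cite{BS,TC,foursel}. First, I would make $F$ explicit as a Siegel-type fundamental domain: write $F = \mathcal{F}_G \cdot \mathcal{R}$, where $\mathcal{F}_G \subset G(\R)$ is a Siegel fundamental domain for $G(\Z)\backslash G(\R)$ (given by truncating the Iwasawa $NAK$ decomposition in the usual way) and $\mathcal{R} \subset V(\R)$ is a bounded set of representatives for the $G(\R)$-orbits on the nondegenerate locus of $V(\R)$ with invariants in a fixed compact range. Let $F(t) := F \cap \{|A|^3 < t^d,\, |B|^2 < t^d\}$. Since $A$ and $B$ are homogeneous polynomial invariants on $V$, the region $F(t)$ is essentially a $t$-dilation of $F(1)$; in particular, its volume grows like $t^m$, and one has $\Vol(F(t)) = c_1 t^m + O(t^{m-1})$ for an explicit positive constant $c_1$.

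Next, I would apply Bhargava's averaging trick. For any bounded $G_0 \subset G(\R)$ of positive Haar measure,
\[
N(V(\Z)_{\rm gen} \cap F(t)) \;=\; \frac{1}{\Vol(G_0)} \int_{h \in G_0} \#\bigl(V(\Z)_{\rm gen} \cap h F(t)\bigr)\, dh,
\]
with each $G(\Z)$-orbit weighted by $1/|\Stab_{G(\Z)}(v)|$ (uniformly bounded by $n^2$ via Theorem~\ref{propparamfield}). Swapping sum and integral reexpresses the right-hand side as a lattice-point count in the thickened region $\bigcup_{h \in G_0} h F(t)$, to which one can apply a Davenport-style estimate: the count equals the volume plus an error whose size is governed by the Lipschitz constants of the bounding hypersurfaces.

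The main obstacle, and the only place where real cleverness enters, is the cusp of $\mathcal{F}_G$: since $G(\Z)\backslash G(\R)$ is noncompact, $\mathcal{F}_G$ is unbounded, and on the cuspidal region the Davenport error could a priori dominate the main term. To handle this, I would partition $F(t) = F(t)^{\rm main} \sqcup F(t)^{\rm cusp}$, truncating the Iwasawa $A$-coordinate of $\mathcal{F}_G$ at a large but bounded height. On the main body, Davenport's estimate applies directly and gives $\Vol(F(t)^{\rm main}) = c_1 t^m + o(t^m)$. On the cusp, I would analyze how the unbounded Iwasawa directions act coordinatewise on $V(\R)$: the cuspidal weights force the ``leading'' coefficients of any $v \in V(\Z) \cap F(t)^{\rm cusp}$ to vanish, which in each of the three cases ($n = 2, 3, 4$) forces $v$ to have a distinguished rational feature---a rational linear factor of the binary quartic, a rational flex of the ternary cubic, or a rational linear factor of the resolvent quartic, respectively---and hence to be non-generic. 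Consequently, the number of \emph{generic} integer points in $F(t)^{\rm cusp}$ is $o(t^m)$, and this together with the main-body count yields the desired asymptotic $c_1 t^m + o(t^m)$.
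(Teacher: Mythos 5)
This theorem is not proved in the paper at all: it is imported verbatim from \cite[Theorem~1.6]{BS}, \cite[Theorem~8]{TC}, and \cite[Theorem~10]{foursel}, and the paper only comments that ``the crux'' is showing the cusp contains few generic points. Your outline does reconstruct the strategy of those cited proofs --- Siegel domain $\mathcal{F}_G\cdot\mathcal{R}$, the averaging trick over a bounded $G_0$, Davenport's lemma on the main body, and a separate treatment of the cusp --- so in that sense you are on the same track as the actual source of the result.

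Two caveats, both concentrated exactly where the paper says the difficulty lies. First, your cusp step is stated too strongly and, as written, is not quite a proof. It is not true that every integer point in the cuspidal region has its leading coefficients forced to vanish in a way that makes it non-generic; only in the deepest part of the cusp does the vanishing of the relevant coefficients force reducibility (a rational linear factor, flex, etc.). In the intermediate range one must stratify the cusp according to \emph{which} subsets of coordinates are forced to be zero, and then show by a weight-by-weight estimate that each stratum that can still contain generic points contributes $o(t^m)$, while the strata contributing a full power of $t^m$ consist entirely of non-generic points. This combinatorial bookkeeping over all coordinate subsets is the bulk of the cited papers and cannot be collapsed into ``the leading coefficients vanish, hence non-generic.'' Second, on the main body Davenport gives you the count of \emph{all} integer points, namely $\Vol(F(t)^{\rm main})+o(t^m)$; to conclude that the \emph{generic} points there number $c_1t^m+o(t^m)$ you must also show that the non-generic integer points in the bounded main body are $o(t^m)$ (the paper invokes Hilbert irreducibility or the arguments of \cite[\S2.2]{BS}, \cite[\S2.5]{TC}, \cite[\S3.5]{foursel} for the analogous point about $rtB\cap F_t$). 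Your sketch omits this step entirely.
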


The main term $c_1t^m$ occurring in Theorem~\ref{bs} is the volume of the region $$F_t:=F\cap\{v\in V_\R:|A(v)|^3<t^d \mbox{ and } |B(v)|^2<t^d\}\subset V(\R),$$ so Theorem~\ref{bs} is natural in that sense.  However, although it has finite volume, the region $F_t$ in $V(\R)$ is not bounded, so it very well could have happened that the number of generic integral points in $F_t$ was at least $c_1't^m+o(t^m)$ for $c_1'>c_1$; indeed, this is the case if we drop the word ``generic''!  Dealing with the infinite cuspidal part of $F_t$, and showing that it contains mostly points that are non-generic and few points that are generic,  is the crux of Theorem~\ref{bs}.  This is an important point.

Now an $n$-Selmer element of an elliptic curve $E=E_{A,B}$ over $\Q$
with $A,B\in\Z$ can be associated to an element of $V(\Z)$ using
Corollary~\ref{cc}.  Furthermore,  an element is the identity in the
$n$-Selmer group if and only if the associated element of $V(\Z)$ is
not generic.  

We have the following count of elliptic curves of bounded height: 

\begin{lemma}\label{eccount}
The number of elliptic curves $E_{A,B}$ over $\Q$ $(A,B\in\Z)$ of height less than $t^d$ is $c_2t^m+o(t^m)$, where $c_2>0$.  
\end{lemma}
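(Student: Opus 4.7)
The plan is that this is essentially a two-parameter lattice-point count, with the one arithmetic input being the relation $d=6m/5$. Since $H(E_{A,B})=\max\{|A|^3,B^2\}$, the condition $H(E_{A,B})<t^d$ is equivalent to $|A|<t^{d/3}$ and $|B|<t^{d/2}$, cutting out the axis-aligned rectangle $R_t=[-t^{d/3},t^{d/3}]\times[-t^{d/2},t^{d/2}]\subset\R^2$ whose area is $4\,t^{d/3+d/2}=4\,t^{5d/6}=4\,t^m$. The entire argument comes down to (i) counting lattice points in $R_t$ with acceptable error, and (ii) checking that the singular locus $\Delta=0$ contributes only lower-order terms.

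First I would carry out the lattice-point count in the box:
\[
\#\bigl(\Z^2\cap R_t\bigr)=(2t^{d/3}+O(1))(2t^{d/2}+O(1))=4\,t^m+O(t^{d/2}).
\]
Since $d/2=3m/5<m$ in each of the three cases $m\in\{5,10,20\}$, the error term is $o(t^m)$, and the claim follows with $c_2=4>0$. If instead the intended count is of \emph{minimal} integral short Weierstrass models, excluding those pairs $(A,B)$ for which some prime $p$ satisfies $p^4\mid A$ and $p^6\mid B$, then a standard M\"obius inclusion--exclusion replaces the constant $c_2=4$ by $c_2=4/\zeta(10)$, which is still strictly positive.

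Second, I would remove the degenerate pairs on which $\Delta=-16(4A^3+27B^2)=0$. Integral solutions of $4A^3+27B^2=0$ are all of the form $(A,B)=(-3k^2,2k^3)$ for $k\in\Z$, so the number of such pairs with $|A|<t^{d/3}$ is $O(t^{d/6})=O(t^{m/5})$, which is absorbed into the $o(t^m)$ error. Hence the count of genuine elliptic curves $E_{A,B}$ of height $<t^d$ is $c_2t^m+o(t^m)$ with $c_2>0$, as claimed.

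There is no substantive obstacle here; the lemma is a routine bookkeeping step whose sole purpose is to pair with Theorem~\ref{bs} (which says that the number of generic integer points in $F_t$ also grows like a constant times $t^m$), so that the ratio of the two counts may be interpreted as an average over elliptic curves of bounded height. The only sanity check needed is the numerical identity $d/3+d/2=5d/6=m$, which is built into the normalization $d=6m/5$ of the discriminant.
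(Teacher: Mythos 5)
Your proposal is correct and follows exactly the paper's (one-line) argument: count lattice points in the box $|A|<t^{d/3}$, $|B|<t^{d/2}$ and use $d/3+d/2=5d/6=m$. The extra care you take with the degenerate locus $4A^3+27B^2=0$ and with the possible minimality normalization is sound but not needed beyond noting these contribute $o(t^m)$ and preserve $c_2>0$.
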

Lemma~\ref{eccount} is elementary: $A$ and $B$ take on the order of $t^{d/3}$ and 
$t^{d/2}$ values, respectively, and hence the total number of $E_{A,B}$
with height less than $t^d$ is on the order of $t^{5d/6}$; the result
then follows from the equality $m=5d/6$.

Theorem~\ref{selresults}, Corollary~\ref{cc}, and Lemma~\ref{eccount} imply that there are at
least $nc_2t^m+o(t^m)$ integer points $v\in F_t$ that are {\it
  generic} and locally soluble.
By Theorem~\ref{nselfail2}, a positive proportion of these
$nc_2t^m+o(t^m)$ generic locally soluble points $v\in F_t$ have the
property that $C(v)$ does not have a rational point and thus fails the
Hasse principle.  That is, there exists a constant $c_3>0$ such that
at least $c_3t^m+o(t^m)$ of the generic integral points in $F_t$ fail the
Hasse principle.

Now let $B$ be a compact set in $V(\R)$ that is the closure of an open
set containing the origin in $V(\R)$.
Since $F_t$ has finite volume, we may choose $r$ large enough so that
$\Vol(rB\cap F_t)$ is an arbitrarily large proportion of $\Vol(F_t)$.
In particular, we may choose a fixed constant $r$ large enough so that
for all $t$, we have
\begin{equation} 
\frac{\Vol(rtB\cap F_t)}{\Vol(F_t)}=1-\frac {c_3}{c_1} +\delta
\end{equation}
for some $\delta > 0$.  Since $rtB\cap F_t$ is bounded and the
number of non-generic integer points in it is negligible as $t\to\infty$ (e.g., by Hilbert irreducibility, or by the arguments of~\cite[\S2.2]{BS}, \cite[\S2.5]{TC}, and \cite[\S3.5]{foursel}), 
the number of generic integer points in $rtB\cap F_t$ is at least
$$(1-\frac {c_3}{c_1} +\delta)(c_1t^m)+o(t^m) =
(c_1-c_3+\delta c_1)t^m+o(t^m).$$  Of these, at least $\delta
c_1t^m$ fail the Hasse principle.  Thus we have produced at least
$\delta c_1t^m$ integral points in $rtB$ that fail the Hasse
principle.  Since the number of integer points in $rtB$ is
$r^{m}t^{m}\Vol(B)+o(t^m)$, we see that
\begin{equation}
\liminf_{t\to\infty} \frac{N_\fail(V,t,B)}{N_\tot(V,t,B)}\geq \frac{\delta c_1t^{m}}{r^{m}t^m\Vol(B)}>0,
\end{equation}
and we have proven Theorem 1.

Theorem~2 then follows by the identical arguments, but using Theorem~\ref{nselsat2} instead of Theorem~\ref{nselfail2}.  

\begin{remark}\label{remimp}{\em 
To obtain an explicit compact set $B$ in the space $V(\R)$ for which we can obtain at least an $11.8\%$ rate of failing the Hasse principle, we may argue as follows.  Since the region $F_1$ has finite volume, for any $\varepsilon>0$, we may choose a compact set $B$ that is the closure of an open set containing the origin such that $\Vol(B)=\Vol(F_1)$ and $\Vol(B\cap F_1)>(1-\varepsilon)\Vol(F_1)$.  (That is, $B$ is a compact set as in \S1.1 that closely approximates $F_1$.)  It is shown in~\cite[Theorem~10]{TC} that, as $t\to\infty$, the average number of generic integral elements in $F_t$ per elliptic curve $E_{A,B}$ of height less than $t^{12}$ is $3\zeta(2)\zeta(3)$.  Of these, by Theorem~\ref{selresults}(b), an average of $3$ lie in the image of $3$-Selmer elements of elliptic curves $E_{A,B}$ 
 under the map of Corollary~\ref{cc}.  By Theorem~\ref{nselfail2}, of these~3, an average of at least $(7/30)\cdot 3=7/10$ fail the Hasse principle.  We conclude that at least 
\begin{equation}\label{firstlb}
(7/10)/(3\zeta(2)\zeta(3))>.118
\end{equation}
of the generic integral ternary cubic forms in $F_t$, as $t\to\infty$, fail the Hasse principle.  Since $\varepsilon$ can be chosen to be arbitrarily small, we obtain sets $B$ for which the Hasse principle fails for integral ternary cubic forms in the homogeneously expanding region $tB$ more than $11.8\%$ of the time.

We may improve the latter constant in a couple of ways, which we now sketch.  First, it is shown in \cite{BS,TC,foursel,fivesel} that the values in Theorem~\ref{selresults} remain the same even if one averages over any family of elliptic curves $E_{A,B}$ defined by finitely many congruence conditions on $A$ and $B$.  Since the number $N$ of $G(\Z)$-equivalences in a given $G(\Q)$-equivalence class of integral ternary cubic forms having given invariants $A$ and $B$ is determined by congruence conditions on $A$ and $B$ (see, e.g., the work of Sadek~\cite{Sadek}), we may count the ternary cubic forms in $F_t$ having given invariants $A,B$ over each value of $N$ using \cite[Theorem~27]{TC}.  Theorem~\ref{nselfail2} then holds for each of the corresponding families of elliptic curves $E_{A,B}$.  This improves the lower bound on the probability $11.8\%$ of Hasse principle failure considerably; indeed, it allows us to remove the factor of $1/\zeta(2)\zeta(3)$ in (\ref{firstlb}) and replace it simply by the probability~(\ref{locsolprob}) of local solubility.  

Next, we may use the analysis of root numbers in \cite{fivesel}, together with the theorem of Dokchitser and Dokchitser~\cite{DD}, to make use of Selmer parity.  Namely, in \cite[\S5]{fivesel} it was shown that there is a family of elliptic curves $E_{A,B}$ of density at least $58.5\%$ among negative discriminant elliptic curves, defined by congruence conditions, and  
in which the root number (and thus the 3-Selmer rank parity, by the theorem of Dokchitser and Dokchitser~\cite{DD}) 
is equidistributed.  On this family, the bound~$7/30$ for plane cubics in Theorem~\ref{nselfail2} can be improved to~1/3 (by analogous arguments, but taking into account the equidistribution of 3-Selmer rank parity).  Thus over elliptic curves of negative discriminant ordered by height, the proportion of 3-Selmer elements that do not possess a rational point is at least  
\begin{equation}\label{secondest}
[(.585)(1/3)+(.415)(7/30)]\times .97256 > .2838.
\end{equation}

Let $F_1^-$ denote the subset of elements in $F_1$ having negative discriminant.  Let $B$ be a compact set  that is the closure of an open set containing the origin such that $\Vol(B)=\Vol(F_1^-)$ and $\Vol(B\cap F_1^-)>(1-\varepsilon)\Vol(F_1^-)$.  
Since again $\varepsilon$ can be chosen to be arbitrarily small, we obtain a set $B$ for which the Hasse principle fails for more than $28.38\%$ of integral ternary cubic forms in the homogeneously expanding region $tB$ as $t\to\infty$.
}\end{remark}

\section{A conjecture on the exact proportion of plane cubic curves failing the Hasse principle}

Let again $n=3$, 4, or 5.  We have seen in Theorem~\ref{selresults}
that if the number of elliptic curves $E=E_{A,B}$ over $\Q$ with
height less than $t^{d}$ is $c_2t^m+o(t^m)$, then there are
$\sigma(n)c_2 t^{m}+o(t^m)$ elements in the $n$-Selmer groups of all
these elliptic curves.

We may now ask: how many of these $n$-Selmer elements are soluble?
The rank distribution conjecture of Goldfeld--Katz--Sarnak states that
we expect 50\% of all elliptic curves over $\Q$ to have rank 0 and
50\% to have rank 1.  This implies (since 0\% of elliptic curves have
nontrivial rational $n$-torsion) that $E(\Q)/nE(\Q)$ has size $1$ for
50\% of all elliptic curves, and has size $n$ for the other 50\% of
elliptic curves.  We also assume as part of the rank distribution
conjecture that, for the remaining 0\% of elliptic curves $E$, the
quantity $n^{r(E)}$ remains bounded on average. Then, we expect that
the total number of elements of $E(\Q)/nE(\Q)\subset S_n(E)$ of
elliptic curves over $\Q$ having height less than $t^d$ is
$$\frac12({c_2t^{m}+nc_2t^{m}}) +o(t^m) = \frac12(n+1)c_2t^{m}+o(t^m).$$
We have proven:

\begin{theorem}\label{cls}
Assume the rank distribution conjecture.  
Then, when all elliptic curves over $\Q$ are ordered by height, the proportion of $n$-Selmer elements of these elliptic curves that have a rational point is $(n+1)/(2\sigma(n))$.
\end{theorem}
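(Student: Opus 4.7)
The plan is to assemble the counts that have already appeared in the excerpt and then perform a single averaging computation. The key observation is that an $n$-Selmer element $\xi \in S_n(E)$ has a rational point precisely when $\xi$ lies in the image of the descent map $E(\Q)/nE(\Q) \hookrightarrow S_n(E)$, so counting soluble $n$-Selmer elements reduces to summing $|E(\Q)/nE(\Q)|$ over elliptic curves of bounded height.

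First, I would record the denominator. By Lemma~\ref{eccount}, the number of elliptic curves $E_{A,B}$ of height less than $t^d$ is $c_2 t^m + o(t^m)$, and by Theorem~\ref{selresults}, the average size of $S_n(E)$ in this family is $\sigma(n)$ for $n \in \{3,4,5\}$; hence the total number of $n$-Selmer elements across curves of height less than $t^d$ is
\begin{equation}
\sigma(n)\,c_2 t^m + o(t^m).
\end{equation}
Next I would compute the numerator under the rank distribution conjecture. For an elliptic curve $E/\Q$ with no nontrivial rational $n$-torsion, one has $|E(\Q)/nE(\Q)| = n^{r(E)}$. The rank distribution conjecture asserts that a density $1/2$ of curves have rank $0$, a density $1/2$ have rank $1$, and the remaining density-zero set contributes a quantity $n^{r(E)}$ that remains bounded on average. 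Since the density of elliptic curves with nontrivial rational $n$-torsion is also zero, the three contributions to $\sum |E(\Q)/nE(\Q)|$ are $\tfrac{1}{2}c_2 t^m \cdot 1$, $\tfrac{1}{2}c_2 t^m \cdot n$, and $o(t^m)$, giving a total of $\tfrac{1}{2}(n+1)\,c_2 t^m + o(t^m)$.

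Dividing numerator by denominator and letting $t \to \infty$ yields the limit $(n+1)/(2\sigma(n))$, which is the claimed proportion. The only place the argument is not purely bookkeeping is step two, where one must verify that the exceptional density-zero families (curves of rank $\geq 2$, and curves with nontrivial rational $n$-torsion) really do contribute $o(t^m)$ to the sum of $|E(\Q)/nE(\Q)|$; this is the main obstacle, and it is handled precisely by the boundedness-on-average hypothesis built into the rank distribution conjecture as stated in the preceding paragraph. With that input in hand, no further estimation is required.
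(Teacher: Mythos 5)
Your proposal is correct and follows essentially the same route as the paper: the denominator is $\sigma(n)c_2t^m+o(t^m)$ from Lemma~\ref{eccount} and Theorem~\ref{selresults}, and the numerator $\tfrac12(n+1)c_2t^m+o(t^m)$ comes from summing $|E(\Q)/nE(\Q)|=n^{r(E)}$ using the $50\%$ rank-$0$ / $50\%$ rank-$1$ prediction, with the density-zero exceptional curves controlled exactly as in the paper by the boundedness-on-average clause of the rank distribution conjecture. No substantive differences.
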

In particular, when $n$ is prime (i.e., $n=2$ or $3$), the proportion of $n$-Selmer elements of elliptic curves having a rational point is exactly 1/2.  When $n=4$, the proportion is $5/14$.

To obtain Conjectures~\ref{conjpc2} and \ref{conjpc3}, we are
interested in {\it generic} $n$-Selmer elements.  Recall that an
element of the $n$-Selmer group $S_n(E)$ of an elliptic curve $E$ over
$\Q$ is called {\it generic} if it has order exactly $n$.

\begin{corollary}\label{cls2}
Assume the rank distribution conjecture.  
Then, when all elliptic curves over $\Q$ are ordered by height, the proportion of generic $n$-Selmer elements of these elliptic curves that have a rational point is $\phi(n)/(2n)$.
\end{corollary}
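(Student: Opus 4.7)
The plan is to adapt the counting argument that yielded Theorem~\ref{cls} by restricting attention to those $n$-Selmer classes of order exactly~$n$. The key observation is that a class in $S_n(E)$ is soluble if and only if it lies in the image of the Kummer injection $E(\Q)/nE(\Q)\hookrightarrow S_n(E)$, and this injection preserves order; hence a soluble class is generic precisely when its preimage in $E(\Q)/nE(\Q)$ has order exactly~$n$.

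First I would discard the density-zero set of elliptic curves with nontrivial rational $n$-torsion, so that for almost all $E$ we have $E(\Q)/nE(\Q)\cong(\Z/n\Z)^{r(E)}$. Then, invoking the Goldfeld--Katz--Sarnak rank distribution conjecture exactly as in the proof of Theorem~\ref{cls}, 50\% of curves (ordered by height) have rank~$0$ and 50\% have rank~$1$, while the remaining density-zero curves contribute $n^{r(E)}$ boundedly on average. A rank~$0$ curve then contributes no generic soluble Selmer class, since $E(\Q)/nE(\Q)$ is trivial. A rank~$1$ curve contributes exactly $\phi(n)$ generic soluble classes, namely the $\phi(n)$ generators of the cyclic group $E(\Q)/nE(\Q)\cong\Z/n\Z$.

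Averaging over elliptic curves of height less than $t^d$ and applying Lemma~\ref{eccount}, the total number of generic soluble $n$-Selmer elements is
$$\tfrac12\cdot 0\cdot c_2t^m+\tfrac12\cdot\phi(n)\cdot c_2t^m+o(t^m)=\tfrac{\phi(n)}{2}\,c_2t^m+o(t^m).$$
Meanwhile, by Theorem~\ref{selresults} together with the observation in \S2 that the average number of generic $n$-Selmer elements per curve is $n$, the total number of generic $n$-Selmer classes across the same family of curves is $nc_2t^m+o(t^m)$. Dividing yields the claimed proportion $\phi(n)/(2n)$.

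There is no substantive obstacle: this is a direct specialization of the proof of Theorem~\ref{cls}. The only points requiring a moment's care are the elementary fact that a cyclic group of order $n$ contains exactly $\phi(n)$ generators, and the verification that the Kummer injection preserves order, so that the two conditions ``generic in $S_n(E)$'' and ``generic in $E(\Q)/nE(\Q)$'' agree on soluble classes. The density-zero rank $\geq 2$ contribution is absorbed into the $o(t^m)$ error in exactly the manner already used to establish Theorem~\ref{cls}.
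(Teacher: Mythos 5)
Your proposal is correct and follows essentially the same argument as the paper: apply the rank distribution conjecture to count generic soluble classes (none from rank $0$, $\phi(n)$ from each rank $1$ curve after discarding the density-zero set with nontrivial rational $n$-torsion), obtain $\tfrac12\phi(n)c_2t^m+o(t^m)$, and divide by the total $nc_2t^m+o(t^m)$ of generic $n$-Selmer classes from Theorem~\ref{selresults}. The only difference is that you make explicit the observation that the Kummer injection preserves order, which the paper uses implicitly.
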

Indeed, in this case, we expect that the total number of elements of $E(\Q)/nE(\Q)\subset S_n(E)$ of elliptic curves over $\Q$ having height less than $t^d$ is $$\frac12(0t^{m}+\phi(n)c_2t^{m}) +o(t^m) = \frac12\phi(n)c_2t^{m}+o(t^m),$$ since the number of order $n$ elements in a cyclic group of order $n$ is $\phi(n)$ and the average number of generic elements in the $n$-Selmer groups of elliptic curves $E$ over $\Q$ is $n$ by Theorem~\ref{selresults}.

In particular, when $n$ is prime (i.e., $n=2$ or $3$), the proportion
of generic $n$-Selmer elements of elliptic curves having a rational
point is $(p-1)/(2p)$, which is $1/4$ for $p=2$ and $2/3$ for $p=3$.
When $n=4$, the proportion of generic $n$-Selmer elements of elliptic
curves having a rational point is again $\phi(4)/(2\cdot4)=1/4$.  We
conjecture that both Theorem~\ref{cls} and Corollary~\ref{cls2} are
true for all~$n$.

Now let $V(\Z)$ denote the space of binary quartic forms, ternary
cubic forms, or pairs of quaternary quadratic forms over $\Q$, in
accordance with whether $n=2$, 3, or 4.  Then in the notation of
Section 4, the proportion of $G(\Q)$-equivalence classes of generic
locally soluble integral elements in the region~$F_t$, as $t\to\infty$, that fail the
Hasse principle is $\phi(n)/(2n)$.  

This is not yet the content of Conjectures~\ref{conjpc2} and
\ref{conjpc3}. We make now two natural assumptions of independence.
First, some $G(\Q)$-equivalence classes of locally soluble integral
elements in $V(\Z)$ break up into a number of $G(\Z)$-equivalence
classes; we assume that this event (which is in fact governed by
certain local conditions) happens independently of whether the
corresponding genus one curve has a global rational point.  Second, we
assume that the integral elements $v\in F_t$ that correspond to a
genus one curve $C(v)$ having a global rational point are
equidistributed in $F_1$, under the natural map from $F_t$ to $F_1$
given by scaling by $t^{-1}$.  (It follows easily from the work of
Poonen and Voloch~\cite{PV} that the latter equidistribution statement
is true if ``having a global rational point'' is replaced with
``having a rational point locally.'')

If these two natural independence assumptions hold, then in any
subregion $R$ of $F_1$, we may conclude that the proportion of locally
soluble integral elements in $tR$ corresponding to a genus one curve
with a rational point is $\phi(n)/(2n)$.  Furthermore, since any set
$B\subset V(\R)$ as in \S1.1 can be covered by portions of translates
of $F_t$, we conclude the same statement with $B$ in place of $R$.  

Therefore, since $\phi(3)/(2\cdot 3)=1/3$, we are led to conjecture that, for any $B$, we have 

\begin{equation}
\liminf_{t\to\infty} \frac{N_\sol(V,t,B)}{N_\locsol(V,t,B)} = \frac13, \;\;\;\,
\liminf_{t\to\infty} \frac{N_\fail(V,t,B)}{N_\locsol(V,t,B)} = \frac23
\end{equation}
if $V$ is the space of ternary cubic forms; and since $\phi(2)/(2\cdot 2)=\phi(4)/(2\cdot 4)=1/4$, we analogously conjecture that
\begin{equation}
\liminf_{t\to\infty} \frac{N_\sol(V,t,B)}{N_\locsol(V,t,B)} = \frac14, \;\;\;\,
\liminf_{t\to\infty} \frac{N_\fail(V,t,B)}{N_\locsol(V,t,B)} = \frac34
\end{equation}
if $V$ is the space of binary quartic forms or the space of
pairs of quaternary quadratic forms.  These are precisely
Conjectures~\ref{conjpc2} and \ref{conjpc3}.


\subsection*{Acknowledgments}

I wish to thank John Cremona, Benedict Gross, Wei Ho, Bjorn Poonen,
Arul Shankar, Michael Stoll, and Jerry Wang for many helpful
conversations.  This work was done in part while the author was at
MSRI during the special semester on Arithmetic Statistics.  The author
was also partially supported by  
NSF grant~DMS-1001828 and a Simons Investigator Grant.

\end{document}